\numberwithin{equation}{section}
\newtheorem{satz}{Satz}[section]
\newtheorem{theorem}[satz]{Theorem}
\newtheorem{proposition}[satz]{Proposition}
\newtheorem{corollary}[satz]{Corollary}
\newtheorem{definition}[satz]{Definition}
\newtheorem{remark}[satz]{Remark}
\DeclareMathOperator{\E}{{\mathbb E}}
\DeclareMathOperator{\R}{{\mathbb R}}
\DeclareMathOperator{\Z}{{\mathbb Z}}
\DeclareMathOperator{\N}{{\mathbb N}}
\DeclareMathOperator{\PP}{{\mathbb P}}
\DeclareMathOperator{\QQ}{{\mathbb Q}}
\DeclareMathOperator{\supp}{supp}
 \DeclareMathOperator{\Id}{Id}
\DeclareMathOperator{\Var}{Var}
\providecommand{\eps}{\varepsilon}
\renewcommand{\phi}{\varphi}
\renewcommand{\theta}{\vartheta}
\renewcommand{\cdot}{{\scriptstyle \bullet} }
\providecommand{\abs}[1]{\lvert #1 \rvert}
\providecommand{\norm}[1]{\lVert #1 \rVert}
\providecommand{\babs}[1]{{\Bigl\lvert #1 \Bigr\rvert}}
\providecommand{\scapro}[2]{\langle #1,#2 \rangle}
\providecommand{\floor}[1]{\lfloor #1 \rfloor}
\renewcommand{\le}{\leqslant}
\renewcommand{\ge}{\geqslant}
\newcommand{\cit}[1]{\citeasnoun{#1}}
\renewcommand{\cal}{\mathscr}     
\begin{document}
\title{Asymptotic equivalence and sufficiency for volatility estimation under microstructure noise}
\author{\parbox[t]{6.5cm}{\centering Markus
 Rei\ss\\[2mm]
 \normalsize{\it
Institute of Mathematics\\
Humboldt-Universit\"at zu Berlin}\\
\mbox{} mreiss@mathematik.hu-berlin.de}} \maketitle

\begin{abstract}
The basic model for high-frequency data in finance is considered, where an efficient price process is observed under microstructure noise. It is shown that this nonparametric model is in Le Cam's sense asymptotically equivalent to a Gaussian shift experiment in terms of the square root of the volatility function $\sigma$. As an application, simple rate-optimal estimators of the volatility and efficient estimators of the integrated volatility are constructed.
\end{abstract}

{\small \noindent {\it Key words and Phrases:} High-frequency data, integrated volatility, spot volatility estimation, Le Cam deficiency,
equivalence of experiments, Gaussian shift.\\
\noindent {\it AMS subject classification:  62G20, 62B15, 62M10,91B84}\\

\newpage

\section{Introduction}

In recent years volatility estimation from high-frequency data has attracted a lot of attention in financial econometrics and statistics. Due to empirical evidence that the observed transaction prices of assets cannot follow a semi-martingale model, a prominent approach is to model the observations as the superposition of the true (or efficient) price process with some measurement error, conceived as microstructure noise. The main features are already present in the basic model of observing
\begin{equation}\label{EqE1Obs}
Y_i=X_{i/n}+\eps_i,\quad i=1,\ldots,n,
\end{equation}
with an efficient price process $X_t=\int_0^t\sigma(s)\,dB_s$, $B$ a standard Brownian motion, and $\eps_i\sim N(0,\delta^2)$ all independent. The aim is to perform
statistical inference on the volatility function $\sigma:[0,1]\to\R^+$, e.g. estimating the so-called integrated volatility $\int_0^1\sigma^2(t)\,dt$ over the trading day.

The mathematical foundation on the parametric formulation of this model has been laid by \cit{GloterJacodI} who prove the interesting result that the model is locally asymptotically normal (LAN) as $n\to\infty$, but with the unusual rate $n^{-1/4}$, while without microstructure noise the rate is $n^{-1/2}$. Starting with \cit{Zhangetal}, the nonparametric model has come into the focus of research. Mainly three different, but closely related approaches have been proposed afterwards to estimate the integrated volatility: multi-scale estimators \cite{Zhang}, realized kernels or autocovariances \cite{BNetal} and preaveraging \cite{Jacodetal}.
Under various degrees of generality, especially also for stochastic volatility, all authors provide central limit theorems with convergence rate $n^{-1/4}$ and an asymptotic variance involving the so-called quarticity $\int_0^1\sigma^4(t)\,dt$. Recently, also the problem of estimating the spot volatility $\sigma^2(t)$ itself has found some interest \cite{MunkJohannes}.

The aim of the present paper is to provide a thorough mathematical understanding of the basic model, to explain why statistical inference is not so canonical and to propose a simple estimator of the integrated volatility which is efficient. To this end we employ Le Cam's concept of asymptotic equivalence between experiments. In fact, our main theoretical result in Theorem \ref{ThmE0G0} states under some regularity conditions that observing $(Y_i)$ in \eqref{EqE1Obs} is for $n\to\infty$ asymptotically equivalent to observing the Gaussian shift experiment
\[ dY_t=\sqrt{2\sigma(t)}\,dt+\delta^{1/2}n^{-1/4}\,dW_t,\quad t\in [0,1],
\]
with Gaussian white noise $dW$. Not only the large noise level $\delta^{1/2}n^{-1/4}$ is apparent, but also a non-linear $\sqrt{\sigma(t)}$-form of the signal, from which optimal asymptotic variance results can be derived. Note that a similar form of a Gaussian shift was found to be asymptotically equivalent to nonparametric density estimation \cite{Nussbaum}. A key ingredient of our asymptotic equivalence proof are the results by \cit{GramaNussbaum02} on asymptotic equivalence for generalized nonparametric regression, but also ideas from \cit{Carter} and \cit{Reiss08} play a role. Moreover, fine bounds on Hellinger distances for Gaussian measures with different covariance operators turn out to be essential.

Roughly speaking, asymptotic equivalence means that any statistical inference procedure can be transferred from one experiment to the other such that the asymptotic risk remains the same, at least for bounded loss functions. Technically, two sequences of experiments ${\cal E}^n$ and ${\cal G}^n$, defined on possibly different sample spaces, but with the same parameter set, are asymptotically equivalent if the Le Cam distance $\Delta({\cal E}^n,{\cal G}^n)$ tends to zero. For ${\cal E}_i=({\cal X}_i,{\cal F}_i,(\PP_\theta^i)_{\theta\in\Theta})$, $i=1,2$, by definition, $\Delta({\cal E}_1,{\cal E}_2)=\max(\delta({\cal E}_1,{\cal E}_2),\delta({\cal E}_1,{\cal E}_2))$ holds in terms of the deficiency $\delta({\cal E}_1,{\cal E}_2)=\inf_M\sup_{\theta\in\Theta}\norm{MP_\theta^1-P_\theta^2}_{TV}$, where the infimum is taken over all randomisations or Markov kernels $M$ from $({\cal X}_1,{\cal F}_1)$ to $({\cal X}_2,{\cal F}_2)$, see e.g. \cit{LeCamYang} for details. In particular, $\delta({\cal E}_1,{\cal E}_2)=0$ means that ${\cal E}_1$ is more informative than ${\cal E}_2$ in the sense that any observation in ${\cal E}_2$ can be obtained from ${\cal E}_1$, possibly using additional randomisations. Here, we shall always explicitly construct the transformations and randomisations and we shall then only use that $\Delta({\cal E}_1,{\cal E}_2)\le \sup_{\theta\in\Theta}\norm{P_\theta^1-P_\theta^2}_{TV}$ holds when both experiments are defined on the same sample space.

The asymptotic equivalence is deduced stepwise. In Section \ref{SecRegWNM} the  regression-type model \eqref{EqE1Obs} is shown to be asymptotically equivalent to a corresponding white noise model with signal $X$. Then in Section \ref{SecLessInfG}, a very simple construction yields a Gaussian shift model with signal $\log(\sigma^2(\cdot)+c)$, $c>0$ some constant, which is asymptotically less informative, but only by a constant factor in the Fisher information. Inspired by this construction, we present a generalisation in Section \ref{SecSeqSimple} where the information loss can be made arbitrarily small (but not zero), before applying nonparametric local asymptotic theory in Section \ref{SecLoc} to derive asymptotic equivalence with our final Gaussian shift model for shrinking local neighbourhoods of the parameters. Section \ref{SecGlob} yields the global result, which is based on an asymptotic sufficiency result for simple independent statistics. 

Extensions and restrictions are discussed in Section \ref{SecDisc} before we use the theoretical insight to construct in Section \ref{SecAppl} a rate-optimal estimator of the spot volatility and an efficient estimator of the integrated volatility by a locally-constant approximation. Remarkably,  the asymptotic variance is found to depend on the third moment $\int_0^1\sigma^3(t)\,dt$ and for non-constant $\sigma^2(\cdot)$ our estimator outperforms previous approaches applied to the basic model. Constructions needed for the proof are presented and discussed alongside the mathematical results, deferring more technical parts to the Appendix, which in Section \ref{SecPrelim} also contains a summary of results on white noise models, the Hellinger distance and Hilbert-Schmidt norm estimates.

\section{The regression and white noise model}\label{SecRegWNM}

In the main part we shall work in the white noise setting, which is more intuitive to handle than the regression setting, which in turn is the observation model in practice. Let us define both models formally. For that we introduce the H\"older ball
\[ C_\alpha(R):=\{ f\in C^\alpha([0,1])\,|\,\norm{f}_{C^\alpha}\le R\}
\text{ with }
\norm{f}_{C^\alpha}=\norm{f}_\infty+\sup_{x\not=y}\frac{\abs{f(x)-f(y)}}{\abs{x-y}^\alpha}.
\]

\begin{definition}
Let ${\cal E}_0={\cal E}_0(n,\delta,\alpha,R,\underline{\sigma}^2)$ with $n\in\N$, $\delta>0$, $\alpha\in (0,1)$, $R>0$, $\underline{\sigma}^2\ge 0$ be the statistical experiment generated by observing \eqref{EqE1Obs}. The volatility $\sigma^2$ belongs to the class
\[ {\cal S}(\alpha,R,\underline{\sigma}^2):=\Big\{\sigma^2\in C^\alpha(R)\,\Big|\, \min_{t\in[0,1]}\sigma^2(t)\ge \underline{\sigma}^2\Big\}.
\]

Let ${\cal E}_1={\cal E}_1(\eps,\alpha,R,\underline{\sigma}^2)$ with $\eps>0$, $\alpha\in (0,1)$, $R>0$, $\underline{\sigma}^2\ge 0$ be the statistical experiment generated by observing
\[ dY_t=X_t\,dt+\eps\,dW_t,\quad t\in [0,1],\]
with $X_t=\int_0^t\sigma(s)\,dB_s$ as above, independent standard Brownian motions $W$ and $B$ and $\sigma^2\in {\cal S}(\alpha,R,\underline{\sigma}^2)$.
\end{definition}

From \cit{BrownLow} it is well known that the white noise and the Gaussian regression model are asymptotically equivalent for noise level $\eps=\delta/\sqrt{n}\to 0$ as $n\to\infty$, provided the signal is $\beta$-H\"older continuous for $\beta>1/2$.
Since Brownian motion and thus also our price process $X$ is only H\"older continuous of order $\beta<1/2$ (whatever $\alpha$ is), it is not clear whether asymptotic equivalence can hold for the experiments ${\cal E}_0$ and ${\cal E}_1$. Yet, this is true. Subsequently, we employ the notation $A_n\lesssim B_n$ if $A_n=O(B_n)$ and $A_n\thicksim B_n$ if $A_n\lesssim B_n$ as well as $B_n\lesssim A_n$ and obtain:

\begin{theorem}\label{ThmRegression}
For any $\alpha>0$, $\underline\sigma^2\ge 0$ and $\delta,R>0$ the experiments ${\cal E}_0$ and ${\cal E}_1$ with  $\eps=\delta/\sqrt{n}$ are asymptotically equivalent ; more precisely:
\[ \Delta({\cal E}_0(n,\delta,\alpha,R,\underline{\sigma}^2),
{\cal E}_1(\delta/\sqrt{n},h,\alpha,R,\underline{\sigma}^2))\lesssim R\delta^{-2}n^{-\alpha}.
\]
\end{theorem}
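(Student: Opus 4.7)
The plan is to bypass the pathwise Brown--Low argument, which fails here because the signal path $X$ has H\"older regularity only $\beta<1/2$, and to work instead at the level of the marginal Gaussian laws. For each fixed $\sigma^2\in{\cal S}(\alpha,R,\underline\sigma^2)$, integrating out the driving Brownian motion $B$ makes the observation in ${\cal E}_0$ a centered Gaussian vector in $\R^n$ with covariance $\Sigma^Y_{ij}=\int_0^{(i\wedge j)/n}\sigma^2(u)\,du+\delta^2\mathbf{1}_{i=j}$, and the observation in ${\cal E}_1$ a centered Gaussian process on $[0,1]$ with an analogous covariance kernel. The Le Cam distance thereby reduces to comparing two Gaussian models whose covariance operators are smooth functionals of $\sigma^2$, and the Hellinger / Hilbert--Schmidt bounds collected in Section~\ref{SecPrelim} are the natural tool.

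\textbf{From ${\cal E}_1$ to ${\cal E}_0$.} The natural randomization is to take increments $Z_i:=n(Y_{i/n}-Y_{(i-1)/n})=n\int_{(i-1)/n}^{i/n}X_s\,ds+\delta\eta_i$, with $\eta_i$ i.i.d.\ $N(0,1)$ after using $\eps=\delta/\sqrt n$. The vector $(Z_i)$ is centered Gaussian with a covariance $\Sigma^Z$ whose discrepancy $\Sigma^Y-\Sigma^Z$ can be controlled entry-by-entry by means of the It\^o isometry and the H\"older regularity of $\sigma^2$. Applying a bound of the form
$$ H^2\bigl(N(0,\Sigma^Y),N(0,\Sigma^Z)\bigr)\lesssim\bigl\|(\Sigma^Y)^{-1/2}(\Sigma^Y-\Sigma^Z)(\Sigma^Y)^{-1/2}\bigr\|_{HS}^2, $$
together with the noise floor $\Sigma^Y\succeq\delta^2\Id$, converts the entrywise bound into the required total-variation estimate for this direction.

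\textbf{From ${\cal E}_0$ to ${\cal E}_1$.} For the reverse direction I would enlarge the sample space of ${\cal E}_0$ by an auxiliary standard Brownian motion $\tilde W$ and interpolate the observations $(Y_i)$ into a continuous path $\tilde Y_t$ whose finite-dimensional covariances match those of the white-noise observation up to a small perturbation. The crucial point is that the covariance functional $\sigma^2\mapsto\Cov(Y_s,Y_t)$ in ${\cal E}_1$ is well approximated by its piecewise-constant version on the sampling grid, with $O(Rn^{-\alpha})$ error by the $\alpha$-H\"older regularity of $\sigma^2$, so only a $\sigma^2$-free randomization of the already-observed $(Y_i)$ plus $\tilde W$ is needed. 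Comparing the two Gaussian processes on $C([0,1])$ by the infinite-dimensional analogue of the Hellinger bound above closes the direction at the advertised rate.

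\textbf{Main obstacle.} The principal difficulty is the reverse direction: the Markov kernel must not depend on the unknown $\sigma^2$, yet it has to produce a continuous Gaussian process whose covariance operator matches that of the white-noise observation in the relevant operator norm. The smoothing effect of the noise floor $\delta$ is what makes this feasible, supplying a $\delta^{-2}$ factor in the Hilbert--Schmidt bound against which the $O(Rn^{-\alpha})$ covariance discrepancy from the $\alpha$-H\"older regularity of $\sigma^2$ can be absorbed, yielding the rate $R\delta^{-2}n^{-\alpha}$. Making the covariance calculations rigorous in the Hilbert--Schmidt norm is the bulk of the technical work, but the structural picture is clean once one accepts working at the level of Gaussian laws rather than sample paths.
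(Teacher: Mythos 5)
Your overall strategy is the paper's: integrate out $B$, treat both experiments as centered Gaussian laws, and compare covariance operators via the Hellinger/Hilbert--Schmidt bound \eqref{EqHellHS} with the noise floor $\Sigma^Y\succeq\delta^2\Id$. The second direction (linear interpolation of the $(Y_i)$ plus a $\sigma^2$-free Gaussian randomisation) also matches the paper in outline, though you leave unverified the one nontrivial point there, namely that the covariance you must add, $\delta^2(n^{-1}\Id-\sum_i b_i\otimes b_i)$, is positive semi-definite (the paper gets this from $\int nb_i=1$ and Jensen).

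The genuine gap is in your first direction. Taking one-sided increments $Z_i=n(Y_{i/n}-Y_{(i-1)/n})$ does \emph{not} give a covariance close enough to $\Sigma^Y$. For $k<l$ one computes $\E[Z_kZ_l]=n\int_{(k-1)/n}^{k/n}a(s)\,ds$ with $a(t)=\int_0^t\sigma^2$, so the entrywise discrepancy from $\Sigma^Y_{kl}=a(k/n)$ is
\[
n\int_{(k-1)/n}^{k/n}\bigl(a(s)-a(k/n)\bigr)\,ds=-\frac{\sigma^2(k/n)}{2n}+O(Rn^{-1-\alpha}),
\]
i.e.\ of order $n^{-1}$, not $o(n^{-1})$: the first-order Taylor term of $a$ does not cancel because the averaging interval is centred at $(k-\tfrac12)/n$ rather than at $k/n$. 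Summing the squares of $\thicksim n^2$ such entries makes the Hilbert--Schmidt bound $\Theta(1)$, so the Hellinger estimate does not close. This is exactly the pitfall the paper flags: one must take the \emph{symmetric} averages $\tilde Y_i=n\int_{(2i-1)/2n}^{(2i+1)/2n}dY_t$, for which $\int(t-i/n)\,dt=0$ kills the linear term and leaves only the $O(Rn^{-1-\alpha})$ remainder, yielding the rate $R\delta^{-2}n^{-\alpha}$. (The same cancellation is what saves your second direction: the hat-function interpolation reproduces affine functions exactly, so the analogous first-order term vanishes there automatically; a generic interpolation would not do.) With the symmetric choice your argument goes through; as written, it fails.
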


Interestingly, the asymptotic equivalence holds for any positive H\"older regularity $\alpha>0$. In particular, the volatility $\sigma^2$ could be itself a continuous semi-martingale, but such that $X$ conditionally on $\sigma^2$ remains Gaussian. As the proof in Section \ref{SecProofThmRegression} of the appendix reveals, we construct the equivalence by rate-optimal approximations of the anti-derivative of $\sigma^2$ which lies in $C^{1+\alpha}$. Similar techniques have been used by \cit{Carter} and \cit{Reiss08}, but here we have to cope with the random signal for which we need to bound the Hilbert-Schmidt norm of the respective covariance operators. Note further that the asymptotic equivalence even holds when the level of the microstructure noise $\delta$ tends to zero, provided $\delta^2n^\alpha\to \infty$ remains valid.

\section{Less informative Gaussian shift experiments}\label{SecLessInfG}

From now on we shall work with the white noise observation experiment ${\cal E}_1$, where the main structures are more clearly visible. In this section we shall find easy Gaussian shift models which are asymptotically not more informative than ${\cal E}_1$, but already permit rate-optimal estimation results.  The whole idea is easy to grasp once we can replace the volatility $\sigma^2$ by a piecewise constant approximation on small blocks of size $h$. That this is no loss of generality, is shown by the subsequent asymptotic equivalence result, proved in Section \ref{SecPropPiecewiseConstant} of the appendix.

\begin{definition}
Let ${\cal E}_2={\cal E}_2(\eps,h,\alpha,R,\underline{\sigma}^2)$ be the statistical experiment generated by observing
\[ dY_t=X_t^h\,dt+\eps\,dW_t,\quad t\in [0,1],\]
with $X_t^h=\int_0^t\sigma(\floor{s}_h)\,dB_s$, $\floor{s}_h:=\floor{s/h}h$ for $h>0$ and $h^{-1}\in\N$,  and independent standard Brownian motions $W$ and $B$. The volatility $\sigma^2$ belongs to the class ${\cal S}(\alpha,R,\underline{\sigma}^2)$.
\end{definition}

\begin{proposition}\label{PropPiecewiseConstant}
Assume $\alpha>1/2$ and $\underline\sigma^2>0$. Then for $\eps\to 0$, $h^\alpha=o(\eps^{1/2})$ the experiments ${\cal E}_1$ and ${\cal E}_2$ are asymptotically equivalent ; more precisely:
\[ \Delta({\cal E}_1(\eps,\alpha,R,\underline{\sigma}^2),
{\cal E}_2(\eps,h,\alpha,R,\underline{\sigma}^2))\lesssim R \underline{\sigma}^{-3/2} h^\alpha \eps^{-1/2}.
\]
\end{proposition}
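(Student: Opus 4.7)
My plan is to exploit the Gaussian structure of both experiments and bound the total variation distance directly. Since ${\cal E}_1$ and ${\cal E}_2$ are naturally defined on the same sample space of paths $(Y_t)_{t\in[0,1]}$ and $Y$ is, under each $\sigma$, a centred Gaussian process, we have $\Delta({\cal E}_1,{\cal E}_2)\le\sup_\sigma\norm{P_\sigma^1-P_\sigma^2}_{TV}$. Writing $\int_0^1\phi(t)\,dY_t=\int_0^1\sigma(s)(T\phi)(s)\,dB_s+\eps\int_0^1\phi\,dW$ with the integration operator $(T\phi)(s):=\int_s^1\phi(t)\,dt$ on $L^2([0,1])$, one reads off that the covariance operators of $dY$ are
\[ C_i=T^*M_{\sigma_i^2}T+\eps^2 I,\qquad \sigma_1^2=\sigma^2,\ \sigma_2^2=\sigma^2(\floor{\cdot}_h),\]
where $M_f$ denotes pointwise multiplication by $f$. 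In particular $C_1-C_2=T^*M_gT$ with $g=\sigma^2-\sigma^2(\floor{\cdot}_h)$ and $\norm{g}_\infty\le Rh^\alpha$.

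Next I would invoke Pinsker's inequality together with the standard Gaussian bound
\[\KL(N(0,C_1)\,\|\,N(0,C_2))\le\tfrac12\norm{C_2^{-1/2}(C_1-C_2)C_2^{-1/2}}_{HS}^2,\]
valid once the operator norm on the right is smaller than $1/2$. Since $T^*M_{\sigma_2^2}T\le C_2$ and $\sigma_2^2\ge\underline\sigma^2$, one has $T^*T\le\underline\sigma^{-2}C_2$, and hence the operator-norm factor is bounded by $R\underline\sigma^{-2}h^\alpha$, which is small in the regime $h^\alpha=o(\eps^{1/2})$, $\eps\to 0$.

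To estimate the Hilbert-Schmidt norm, I would use a cyclic-trace identity. Setting $S:=TC_2^{-1}T^*$ (self-adjoint and, as the spectral computation below shows, Hilbert-Schmidt),
\[ \norm{C_2^{-1/2}T^*M_gTC_2^{-1/2}}_{HS}^2 = \trace\bigl((M_gS)^2\bigr) = \int_0^1\!\!\int_0^1 g(s)g(t)S(s,t)^2\,ds\,dt \le\norm{g}_\infty^2\,\norm{S}_{HS}^2.\]
The operator inequality $C_2\ge\underline\sigma^2 T^*T+\eps^2I$ and monotonicity of $\norm{\cdot}_{HS}$ on the cone of positive operators then reduce the task to estimating $\norm{T(\underline\sigma^2T^*T+\eps^2 I)^{-1}T^*}_{HS}^2$, which is diagonalised by the SVD of $T$. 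Using $T^*T$ has the Brownian covariance kernel $s\wedge t$ with singular values $\tau_n\sim 1/(\pi n)$, the diagonal entries are $\tau_n^2/(\underline\sigma^2\tau_n^2+\eps^2)$ and
\[ \norm{S}_{HS}^2\lesssim\sum_{n\ge 1}\frac{1}{(\underline\sigma^2+\eps^2 n^2)^2}\lesssim \underline\sigma^{-3}\eps^{-1}.\]
Combining these estimates yields $\KL\lesssim R^2\underline\sigma^{-3}h^{2\alpha}\eps^{-1}$, and Pinsker together with $\Delta\le\sup_\sigma\norm{P_\sigma^1-P_\sigma^2}_{TV}$ gives the claimed bound.

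The main obstacle is to produce the sharp exponent $\eps^{-1/2}$: a naive use of $\norm{C_2^{-1}}_{op}\le\eps^{-2}$ would only give $\eps^{-2}$, hopelessly large. The saving comes from preserving the factorisation $C_1-C_2=T^*M_gT$ through the cyclic-trace identity and exploiting the Brownian-type spectral decay $\tau_n^2\sim n^{-2}$ of $T^*T$, which effectively caps the resolvent sum at frequencies $n\sim\underline\sigma/\eps$. Technical points needing care are the operator monotonicity of $\norm{\cdot}_{HS}$ on positive operators and the passage from the block-constant volatility $\sigma^2(\floor{\cdot}_h)$ to its uniform lower bound $\underline\sigma^2$, which is exactly where the factor $\underline\sigma^{-3/2}$ originates.
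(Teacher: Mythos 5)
Your argument is correct and follows essentially the same route as the paper: both reduce the Le Cam distance to a distance between the two centred Gaussian laws on path space, bound it by $\norm{C^{-1/2}(C_1-C_2)C^{-1/2}}_{HS}$, exploit the operator ordering $C\ge\underline\sigma^2 T^*T+\eps^2\Id$ with $T^*T$ the Brownian covariance, and extract $\underline\sigma^{-3}\eps^{-1}$ from the spectral sum $\sum_n(\underline\sigma^2+\eps^2\pi^2n^2)^{-2}$ (the paper's Proposition on the Hellinger distance between $\PP^{\sigma_1,\eps}$ and $\PP^{\sigma_2,\eps}$ does exactly this). Your substitution of Kullback--Leibler plus Pinsker for the paper's Hellinger bound, and of the cyclic-trace identity for the paper's quadratic-form domination combined with Hilbert--Schmidt monotonicity, are immaterial variations that yield the same bound $R\underline\sigma^{-3/2}h^\alpha\eps^{-1/2}$.
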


In the sequel we always assume $h^\alpha=o(\eps^{1/2})$ to hold such that we can work equivalently with ${\cal E}_2$. Recall that observing $Y$ in a white noise model is equivalent to observing $(\int e_m\,dY)_{m\ge 1}$ for an orthonormal basis $(e_m)_{m\ge 1}$ of $L^2([0,1])$, cf. also Subsection \ref{SecPrelim} below. Our first step is thus to find an orthonormal system  (not a basis) which extracts as much {\it local } information on $\sigma^2$ as possible. For any $\phi\in L^2([0,1])$ with $\norm{\phi}_{L^2}=1$ we have by partial integration
\begin{align}
\int_0^1 \phi(t) dY_t&=\int_0^1 \phi(t)X_t^h\,dt+\eps\int_0^1\phi(t)\,dW_t\nonumber\\
&=\Phi(1)X_1^h-\Phi(0)X_0^h-\int_0^1 \Phi(t)\sigma(\floor{t}_h)\,dB_t+\eps\int\phi(t)\,dW_t\nonumber\\
&=\Big(\int_0^1\Phi^2(t)\sigma^2(\floor{t}_h)\,dt+\eps^2\Big)^{1/2}\zeta_\phi\label{EqCovY}
\end{align}
where $\Phi(t)=-\int_t^1\phi(s)\,ds$ is the antiderivative of $\phi$ with $\Phi(1)=0$ and $\zeta_\phi\sim N(0,1)$ holds. To ensure that $\Phi$ has only support in some interval $[kh,(k+1)h]$, we require $\phi$ to have support in $[kh,(k+1)h]$ and to satisfy $\int \phi(t)\,dt=0$. The function $\phi_k$ with $\supp(\phi_k)=[kh,(k+1)h]$, $\norm{\phi_k}_{L^2}=1$, $\int \phi_k(t)\,dt=0$ that maximizes the information load $\int \Phi_k^2(t)\,dt$ for $\sigma^2(kh)$ is given by (use Lagrange theory)
\begin{equation}\label{Eqphik}
\phi_k(t)=\sqrt{2}h^{-1/2}\cos\big(\pi(t-kh)/h\big) {\bf 1}_{[kh,(k+1)h]}(t),\quad t\in [0,1].
\end{equation}
The $L^2$-orthonormal system $(\phi_k)$ for $k=0,1,\ldots,h^{-1}-1$ is now used to construct Gaussian shift observations. In ${\cal E}_2$ we obtain from \eqref{EqCovY} the observations
\begin{equation}\label{Eqyk}
y_k:=\int \phi_k(t)\,dY_t=\Big(h^2\pi^{-2}\sigma^2(kh)+\eps^2\Big)^{1/2}\zeta_k,\quad k=0,\ldots,h^{-1}-1,
\end{equation}
with independent standard normal random variables $(\zeta_k)_{k=0,\ldots,h^{-1}-1}$. Observing $(y_k)$ is clearly equivalent to observing
\begin{equation}\label{Eqzk}
z_k:=\log(y_k^2h^{-2}\pi^2)-\E[\log(\zeta_k^2)]
=\log\Big(\sigma^2(kh)+\eps^2h^{-2}\pi^2\Big)+\eta_k
\end{equation}
for $k=0,\ldots,h^{-1}-1$ with $\eta_k:=\log(\zeta_k^2)-\E[\log(\zeta_k^2)]$.

We have found a nonparametric regression model with regression function $\log(\sigma^2(\cdot)+\eps^2h^{-2}\pi^2)$ and $h^{-1}$ equidistant observations corrupted by non-Gaussian, but centered noise $(\eta_k)$ of variance 2. To ensure that the regression function does not change under the asymptotics $\eps\to 0$, we specify the block size $h=h(\eps)=h_0\eps$ with some fixed constant $h_0>0$.

It is not surprising that the nonparametric regression experiment in \eqref{Eqzk} is equivalent to a corresponding Gaussian shift experiment. Indeed, this follows readily from results by \cit{GramaNussbaum02} who in their Section 4.2 derive asymptotic equivalence already for our Gaussian scale model \eqref{Eqyk}. Note, however, that their Fisher information should be $I(\theta)=\frac12 \theta^{-2}$ and we thus have asymptotic equivalence of \eqref{Eqyk} with the Gaussian regression model
\[ w_k=\tfrac{1}{\sqrt{2}}\log(\sigma^2(kh)+h_0^{-2}\pi^2)+\gamma_k,\quad k=0,\ldots,h^{-1}-1,
\]
where $\gamma_k\sim N(0,1)$ i.i.d. Since by the classical result of \cit{BrownLow} the Gaussian regression is equivalent to the corresponding white noise experiment (note that $\log(\sigma^2(\cdot)+h_0^{-2}\pi^2)$ is also $\alpha$-H\"older continuous), we have already derived an important and far-reaching result.

\begin{theorem}\label{ThmEquivE1G1}
For $\alpha>1/2$ and $\underline\sigma^2>0$ the high frequency experiment ${\cal E}_1(\eps,\alpha,R,\underline{\sigma}^2)$ is asymptotically more informative than the Gaussian shift experiment ${\cal G}_1(\eps,\alpha,R,\underline{\sigma}^2,h_0)$ of observing
\[ dZ_t=\tfrac{1}{\sqrt{2}}\log\Big(\sigma^2(t)+h_0^{-2}\pi^2\Big)\,dt
+h_0^{1/2}\eps^{1/2}dW_t,\quad t\in [0,1].
\]
Here $h_0>0$ is an arbitrary constant and $\sigma^2\in {\cal S}(\alpha,R,\underline{\sigma}^2)$.
\end{theorem}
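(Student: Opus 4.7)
The proof should proceed as a chain of reductions, each step degrading the model in the ``more informative than'' direction (or preserving equivalence), assembling to the Le Cam deficiency bound we need.

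First I would invoke Proposition \ref{PropPiecewiseConstant} to replace ${\cal E}_1$ by the piecewise-constant-volatility experiment ${\cal E}_2$: with the specific choice $h=h_0\eps$ the hypothesis $h^\alpha=o(\eps^{1/2})$ reduces to $\eps^{\alpha-1/2}=o(1)$, which holds precisely because $\alpha>1/2$. From here on I only need to argue that ${\cal E}_2$ is asymptotically more informative than ${\cal G}_1$. The next step is the ``local information extraction'' derived in the passage preceding the theorem: project the observations in ${\cal E}_2$ onto the orthonormal system $(\phi_k)_{k=0,\ldots,h^{-1}-1}$ defined in \eqref{Eqphik}. Since $\phi_k$ is supported in $[kh,(k+1)h]$ with $\int\phi_k=0$, its antiderivative $\Phi_k$ vanishes outside this block, and identity \eqref{EqCovY} yields mutually independent Gaussians $y_k\sim N(0,h^2\pi^{-2}\sigma^2(kh)+\eps^2)$ as in \eqref{Eqyk}. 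Because $(\phi_k)$ is only an orthonormal system and not a basis, this step is a loss of information, but it only costs us the ``less informative'' direction, which is exactly what the theorem claims.

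The third step is the bijective (hence equivalence-preserving) transformation $y_k\mapsto z_k$ of \eqref{Eqzk}, producing a nonparametric regression experiment with regression function
\[ f_\eps(t)=\log\bigl(\sigma^2(t)+\eps^2h^{-2}\pi^2\bigr)=\log\bigl(\sigma^2(t)+h_0^{-2}\pi^2\bigr) \]
(the key cancellation from the calibration $h=h_0\eps$) observed at the $h^{-1}$ grid points $kh$, corrupted by i.i.d. centered non-Gaussian noise $\eta_k=\log(\zeta_k^2)-\E[\log(\zeta_k^2)]$ of variance $2$. Now I would apply the asymptotic equivalence result of \cit{GramaNussbaum02} in Section 4.2 directly to the Gaussian scale family \eqref{Eqyk}. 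The Fisher information for a single observation $N(0,\theta)$ is $I(\theta)=\tfrac12\theta^{-2}$, so Grama-Nussbaum yields equivalence with the Gaussian regression experiment
\[ w_k=\tfrac{1}{\sqrt{2}}\log\bigl(\sigma^2(kh)+h_0^{-2}\pi^2\bigr)+\gamma_k,\qquad \gamma_k\sim N(0,1)\text{ i.i.d.} \]
To apply their result one needs uniform control of the parameter over the family ${\cal S}(\alpha,R,\underline\sigma^2)$; this is precisely provided by the Hölder ball together with $\underline\sigma^2>0$, which keep the argument of $\log$ bounded away from $0$ and $\infty$, so the transform is smooth on the relevant range.

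The final step is Brown-Low: since $\log(\sigma^2(\cdot)+h_0^{-2}\pi^2)$ inherits $\alpha$-Hölder regularity from $\sigma^2$ (the log is $C^\infty$ on a compact interval bounded away from $0$), and $\alpha>1/2$, the Gaussian regression with $h^{-1}$ equidistant design points is asymptotically equivalent to the continuous white noise model with noise level $h^{1/2}=h_0^{1/2}\eps^{1/2}$, which is exactly ${\cal G}_1$. Chaining the four comparisons via the triangle inequality for $\delta(\cdot,\cdot)$ gives $\delta({\cal E}_1,{\cal G}_1)\to 0$. I expect the main obstacle to be the verification that the hypotheses of \cit{GramaNussbaum02} apply uniformly over ${\cal S}(\alpha,R,\underline\sigma^2)$: one has to check their regularity conditions for the Gaussian scale family on the interval $[\underline\sigma^2+h_0^{-2}\pi^2,\,R+h_0^{-2}\pi^2]$ and quantify the rate at which their bound decays as the sample size $h^{-1}\thicksim \eps^{-1}$ grows; everything else is either a direct sufficiency argument or a standard invocation of a cited equivalence result.
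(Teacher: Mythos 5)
Your proposal is correct and follows essentially the same route as the paper: reduction to ${\cal E}_2$ via Proposition \ref{PropPiecewiseConstant} with $h=h_0\eps$, extraction of the independent Gaussian scale observations $(y_k)$ as an information-losing sufficiency step, application of the Grama--Nussbaum Section 4.2 equivalence with Fisher information $\tfrac12\theta^{-2}$, and a final Brown--Low step using the $\alpha$-H\"older regularity of $\log(\sigma^2(\cdot)+h_0^{-2}\pi^2)$. The one point you flag as the main obstacle (uniform verification of the Grama--Nussbaum regularity conditions over the compact parameter interval) is exactly the point the paper also defers to its later, more detailed treatment.
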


\begin{remark}\label{RemG1loss}
Moving the constants from the diffusion to the drift part, the experiment ${\cal G}_1$ is equivalent to observing
\begin{equation}\label{EqZtilde}
d\tilde Z_t=(2h_0)^{-1/2}\log(\sigma^2(t)+h_0^{-2}\pi^2)\,dt
+\eps^{1/2}dW_t,\quad t\in [0,1].
\end{equation}
The Gaussian shift experiment is nonlinear in $\sigma^2$ which is to be expected. Writing $\eps=\delta/\sqrt{n}$ gives us the noise level $\delta^{1/2}n^{-1/4}$ which appears in all previous work on the model ${\cal E}_0$.

To quantify the amount of information we have lost, let us study the LAN-property of the constant parametric case $\sigma^2(t)=\sigma^2>0$ in ${\cal G}_1$. We consider the local alternatives $\sigma_\eps^2=\sigma_0^2+\eps^{1/2}$ for which we obtain the Fisher information $I_{h_0}=(2h_0)^{-1}h_0^4/(\pi^2+h_0^2\sigma_0^2)^2$. Maximizing over $h_0$ yields $h_0=\sqrt{3}\pi\sigma_0^{-1}$ and the Fisher information is at most equal to \[\sup_{h_0>0}I_{h_0}=\sigma_0^{-3}3^{3/2}/(32\pi)\approx 0.0517\sigma_0^{-3}.
\]
By the LAN-result of \cit{GloterJacodI} for ${\cal E}_0$ the best value is $I(\sigma_0)=\frac18\sigma_0^{-3}$ which is clearly larger. Note, however, that the relative (normalized) efficiency is already $\frac{\sqrt{3^{3/2}/(32\pi)}}{\sqrt{1/8}}\approx 0.64$, which means that we attain about $64\%$ of the precision when working with ${\cal G}_1$ instead of ${\cal E}_0$ or ${\cal E}_1$.
\end{remark}


\section{A close sequence of simple models}\label{SecSeqSimple}

In order to decrease the information loss in ${\cal G}_1$, we now take into account higher frequencies in each block $[kh,(k+1)h]$.
In a frequency-location notation $(j,k)$ we consider for $k=0,1,\ldots,h^{-1}-1,\,j\ge 1$
\begin{equation}\label{Eqphijk}
\phi_{jk}(t)=\sqrt{2}h^{-1/2}\cos(j\pi(t-kh)/h){\bf 1}_{[kh,(k+1)h]}(t),\quad t\in [0,1].
\end{equation}
This gives the corresponding antiderivatives
\[ \Phi_{jk}(t)=\frac{\sqrt{2h}}{\pi j}\sin(j\pi(t-kh)/h){\bf 1}_{[kh,(k+1)h]}(t),\quad t\in [0,1].
\]
Not only the $(\phi_{jk})$ and $(\Phi_{jk})$ are localized on each block, also each single family of functions is orthogonal in $L^2([0,1])$. Working again on the piecewise constant experiment ${\cal E}_2$, we extract the observations
\begin{equation}\label{Eqyjk}
y_{jk}:=\int_0^1 \phi_{jk}(t)\,dY_t=\Big(h^2\pi^{-2}j^{-2}\sigma^2(kh)+\eps^2\Big)^{1/2}\zeta_{jk},
j\ge 1,\,k=0,\ldots,h^{-1}-1,
\end{equation}
with $\zeta_{jk}\sim N(0,1)$ independent over all $(j,k)$. The same transformation as before leads for each $j\ge 1$ to the regression model for $k=0,\ldots,h^{-1}-1$
\begin{equation}\label{Eqzjk} z_{jk}:=\log(y_{jk}^2)-\log(h^2\pi^{-2}j^{-2})-\E[\log(\zeta_{jk}^2)]
=\log(\sigma^2(t)+\eps^2h^{-2}\pi^2j^2)+\eta_{jk}.
\end{equation}
Applying the asymptotic equivalence result by \cit{GramaNussbaum02} for each independent level $j$ separately, we immediately generalize Theorem \ref{ThmEquivE1G1}.

\begin{theorem}\label{ThmE1G2}
For $\alpha>1/2$ and $\underline{\sigma}^2>0$ the high frequency experiment ${\cal E}_1(\eps,\alpha,R,\underline{\sigma}^2)$ is asymptotically more informative than the combined experiment ${\cal G}_2(\eps,\alpha,R,\underline{\sigma}^2,h_0,J)$ of independent Gaussian shifts
\[ dZ^j_t=\tfrac{1}{\sqrt{2}}\log(\sigma^2(t)+h_0^{-2}\pi^2j^2)\,dt
+h_0^{1/2}\eps^{1/2}dW^j_t,\quad t\in [0,1],\,j=1,\ldots,J,
\]
with independent Brownian motions  $(W^j)_{j=1,\ldots,J}$ and $\sigma^2\in {\cal S}(\alpha,R,\underline{\sigma}^2)$.
The constants $h_0>0$ and $J\in\N$ are arbitrary, but fixed.
\end{theorem}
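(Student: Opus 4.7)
The plan is to mimic the proof of Theorem~\ref{ThmEquivE1G1} but to exploit all frequencies $j=1,\ldots,J$ on each block simultaneously. By Proposition~\ref{PropPiecewiseConstant} I may pass from ${\cal E}_1$ to the piecewise-constant experiment ${\cal E}_2$ at negligible cost, and then form the coefficients $y_{jk}=\int \phi_{jk}\,dY$ using the enlarged system \eqref{Eqphijk}. Two things have to be verified: the family $(\phi_{jk})_{j,k}$ is still $L^2$-orthonormal (immediate, since different blocks have disjoint support and within a block the cosines of distinct integer frequencies are orthogonal), and the Gaussian vector $(y_{jk})$ is independent under $\PP_{\sigma^2}$.

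Cross-block independence comes from disjoint supports as in Section~\ref{SecLessInfG}, so the new point is independence across frequencies within a block. After the partial integration in the style of \eqref{EqCovY}, the signal part of $y_{jk}$ equals $-\int \Phi_{jk}(t)\sigma(\floor{t}_h)\,dB_t$, whose covariance with the corresponding expression for $y_{j'k}$ reduces to $\sigma^2(kh)\int_{kh}^{(k+1)h}\Phi_{jk}(t)\Phi_{j'k}(t)\,dt$, since $\sigma$ is constant on the block. A change of variable turns this into a multiple of $\int_0^1 \sin(j\pi u)\sin(j'\pi u)\,du$, which vanishes for $j\neq j'$; the noise contributions $\eps\int\phi_{jk}\,dW$ are already independent by the $L^2$-orthogonality of $(\phi_{jk})$. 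This yields the scale representation \eqref{Eqyjk} with genuinely independent $\zeta_{jk}\sim N(0,1)$.

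Once this independence structure is in place, the monotone transformation leading to \eqref{Eqzjk} produces, for each fixed $j$, a nonparametric regression model on the equidistant grid $\{kh\}$ with regression function $f_j:=\log(\sigma^2(\cdot)+h_0^{-2}\pi^2 j^2)$ (after setting $h=h_0\eps$) and i.i.d.\ centered errors of variance~$2$. Because $\sigma^2\in{\cal S}(\alpha,R,\underline\sigma^2)$ with $\alpha>1/2$ and $\underline\sigma^2>0$, the $f_j$ lie uniformly in a Hölder ball bounded away from $\pm\infty$, so the \cit{GramaNussbaum02} result applies level-wise to yield asymptotic equivalence with a Gaussian regression experiment; the Brown--Low theorem then lifts this to the white noise shift $dZ^j_t=\tfrac{1}{\sqrt 2}f_j(t)\,dt+h_0^{1/2}\eps^{1/2}\,dW^j_t$, exactly as in Theorem~\ref{ThmEquivE1G1}.

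It remains to combine the $J$ levels into a single bound. The randomisations provided by Grama--Nussbaum act only on the coordinates $(y_{jk})_k$ for a fixed $j$ and, when composed independently over $j$, realise ${\cal G}_2$ as a randomisation of ${\cal E}_2$. The total variation distance of the resulting laws is then controlled by the sum of the $J$ level-wise distances through the standard product inequality $\norm{P_1\otimes\cdots\otimes P_J-Q_1\otimes\cdots\otimes Q_J}_{TV}\le\sum_{j=1}^{J}\norm{P_j-Q_j}_{TV}$, so with $J$ fixed the deficiency still vanishes as $\eps\to 0$. The main obstacle I anticipate is the frequency-independence inside each block, which relies delicately on $\sigma$ being constant per block; everything else is a finite repetition of the Theorem~\ref{ThmEquivE1G1} argument.
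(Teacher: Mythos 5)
Your argument is correct and coincides with the paper's own (very terse) proof: pass to ${\cal E}_2$, observe that the orthogonality of both $(\phi_{jk})$ and $(\Phi_{jk})$ together with the block-constancy of $\sigma$ makes the Gaussian vector $(y_{jk})$ independent, apply the Grama--Nussbaum equivalence level-wise followed by Brown--Low, and sum the $J$ (fixed) level-wise distances. Your explicit verification of the within-block frequency independence and the product total-variation bound for the composed randomisations are precisely the details the paper leaves implicit.
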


\begin{remark}
Let us again study the LAN-property of the constant parametric case $\sigma^2(t)=\sigma^2>0$ for the local alternatives $\sigma^2_\eps=\sigma_0^2+\eps^{1/2}$. We obtain the Fisher information
\[I_{h_0,J}=\sum_{j=1}^J(2h_0)^{-1}h_0^4(\pi^2j^2+h_0^2\sigma_0^2)^{-2}
=\sum_{j=1}^J\frac{h_0^{-1}}{2(\pi^2(jh_0^{-1})^2+\sigma_0^2)^2}.\]
In the limit $J\to\infty$ and $h_0\to\infty$ we obtain by Riemann sum approximation
\[ \lim_{h_0\to\infty}\lim_{J\to\infty} I_{h_0,J}=\int_0^\infty \frac{dx}{2(\pi^2x^2+\sigma_0^2)^2}
=\frac{1}{8\sigma_0^3}.
\]
This is exactly the optimal Fisher information, obtained by \cit{GloterJacodI} in this case. Note, however, that it is not at all obvious that we may let $J,h_0\to\infty$, in the asymptotic equivalence result. Moreover, in our theory the restriction $h^\alpha=o(\eps^{1/2})$ is necessary, which translates into $h_0=o(\eps^{(1-2\alpha)/2\alpha})$. Still, the positive aspect is that we can come as close as we wish to an asymptotically almost equivalent, but much simpler model.
\end{remark}

\section{Localisation}\label{SecLoc}

We know from standard regression theory \cite{Stone82} that in the experiment ${\cal G}_1$ we can estimate $\sigma^2\in C^\alpha$ in sup-norm with rate $(\eps\log(\eps^{-1}))^{\alpha/(2\alpha+1)}$, using that the log-function is a $C^\infty$-diffeomorphism for arguments bounded away from zero and infinity. Since ${\cal E}_1$ is for $\alpha>1/2$ asymptotically more informative than ${\cal G}_1$, we can therefore localize $\sigma^2$ in a neighbourhood of some $\sigma_0^2$. Using the local coordinate $s^2$ in $\sigma^2=\sigma_0^2+v_\eps s^2$ for $v_\eps\to 0$ we define a localized experiment, cf. \cit{Nussbaum}.

\begin{definition}
Let ${\cal E}_{i,loc}={\cal E}_{i,loc}(\sigma_0,\eps,\alpha,R,\underline{\sigma}^2)$ for $\sigma_0\in {\cal S}(\alpha,R,\underline{\sigma}^2)$ be the statistical subexperiment obtained from ${\cal E}_i(\eps,\alpha,R,\underline{\sigma}^2)$ by restricting to the parameters $\sigma^2=\sigma_0^2+v_\eps s^2$ with $v_\eps=\eps^{\alpha/(2\alpha+1)}\log(\eps^{-1})$ and unknown $s^2\in C^\alpha(R)$.
\end{definition}

We shall consider the observations $(y_{jk})$ in \eqref{Eqyjk} derived from ${\cal E}_{2,loc}$ and multiplied by $\pi j/h$. The model is then a generalized nonparametric regression family in the sense of \cit{GramaNussbaum02}. On the sequence space $({\cal X},{\cal F})=(\R^{\N},{\mathfrak B}^{\otimes\N})$ we consider for $\theta\in\Theta=[\underline\sigma^2,R]$ the Gaussian product measure
\begin{equation}\label{EqPtheta}
 \PP_\theta=\bigotimes_{j\ge 1} N\big(0,\theta+h_0^{-2}\pi^2j^2\big).
\end{equation}
The parameter $\theta$ plays the role of $\sigma^2(kh)$ for each $k$. By independence and the result for the one-dimensional Gaussian scale model, the Fisher information for $\theta$ is given by
\begin{equation}\label{EqItheta} I(\theta):=\sum_{j\ge 1} \frac{1}{2(\theta+h_0^{-2}\pi^2j^2)^2}
=\frac{h_0}{8\theta^{3/2}}\Big(\frac{1+4\theta^{1/2} h_0 e^{-2\theta^{1/2} h_0}-e^{-4\theta^{1/2} h_0}}{(1-e^{-2\theta^{1/2} h_0})^2}
-\frac{2}{\theta^{1/2} h_0}\Big),
\end{equation}
where the series is evaluated in Section \ref{SecSeries} using Fourier analysis. Since we shall later let $h_0$ tend to infinity, an essential point is the asymptotics $I(\theta)\thicksim h_0$.

We split our observation design $\{kh\,|\,k=0,\ldots,h^{-1}\}$ into blocks $A_m=\{kh\,|\,k=(m-1)\ell,\ldots,m\ell-1\}$, $m=1,\ldots,(\ell h)^{-1}$, of length $\ell$ such that the radius $v_\eps$ of our nonparametric local neighbourhood has the order of the {\it parametric}  noise level $(I(\theta)\ell)^{-1/2}$ in each block:
\[ v_\eps\thicksim (I(\theta)\ell)^{-1/2}\Rightarrow \ell\thicksim h_0^{-1}v_\eps^{-2}.
\]

For later convenience we consider odd and even  indices $k$ separately, assuming that $h^{-1}$ and $\ell$ are even integers.
This way, for each block $m$ observing $(y_{jk}\pi j/h)$ for $j\ge 1$ and $k\in A_m$, $k$ odd respectively $k$ even, can be modeled by the experiments
\begin{align}\label{EqE3m}
{\cal E}_{3,m}^{odd}&=\Big({\cal X}^{\ell/2},{\cal F}^{\otimes\ell/2},\Big(\bigotimes_{k\in A_m\text{ odd}}\PP_{\sigma_0^2(k/n)+v_\eps s^2(k/n)}\Big)_{s^2\in C_\alpha(R)}\Big),\\
{\cal E}_{3,m}^{even}&=\Big({\cal X}^{\ell/2},{\cal F}^{\otimes\ell/2},\Big(\bigotimes_{k\in A_m\text{ even}}\PP_{\sigma_0^2(k/n)+v_\eps s^2(k/n)}\Big)_{s^2\in C_\alpha(R)}\Big),
\end{align}
where all parameters are the same as for ${\cal E}_{2,loc}$. Using the nonparametric local asymptotic theory developed by \cit{GramaNussbaum02} and the independence of the experiments $({\cal E}_{3,m}^{odd})_m$ (resp. $({\cal E}_{3,m}^{even})_m$), we are able to prove in Section \ref{SecProofPropE1G3loc} the following asymptotic equivalence.

\begin{proposition}\label{PropE1G3loc}
Assume $\alpha>1/2$, $\underline\sigma^2>0$ and $h_0\thicksim\eps^{-p}$ with $p\in (0,1-(2\alpha)^{-1})$ such that $(2h)^{-1}\in\N$. Then observing $\{y_{j,2k+1}\,|\,j\ge 1, k=0,\ldots,(2h)^{-1}-1\}$ in experiment ${\cal E}_{2,loc}$ is asymptotically equivalent to the local Gaussian shift experiment ${\cal G}_{3,loc}$ of observing
\begin{equation}\label{EqG3loc}
dY_t=\frac{1}{\sqrt{8}\sigma_0^{3/2}(t)}\Big(1
-\frac{2}{\sigma_0(t)h_0}\Big)^{1/2}v_\eps s^2(t)\,dt+ (2\eps)^{1/2}dW_t,\quad t\in [0,1],
\end{equation}
where the unknown $s^2$ and all parameters are the same as in ${\cal E}_{2,loc}$. The Le Cam distance tends to zero uniformly over the center of localisation $\sigma_0^2\in{\cal S}(\alpha,R,\underline{\sigma}^2)$.

The same asymptotic equivalence result holds true for observing $\{y_{j,2k}\,|\,j\ge 1, k=0,\ldots,(2h)^{-1}-1\}$ in experiment ${\cal E}_{2,loc}$.
\end{proposition}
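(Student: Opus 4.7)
The plan is to combine blockwise parametric LAN for the product Gaussian scale family $\PP_\theta$ from \eqref{EqPtheta} with the nonparametric local asymptotic equivalence theory of \cit{GramaNussbaum02}, and then reassemble the blockwise Gaussian shifts into a single white noise experiment on $[0,1]$.

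First, I would work with the blocks $A_m$ of length $\ell \thicksim h_0^{-1} v_\eps^{-2}$ already introduced, restricted to odd indices in each block; the even case is identical by symmetry. This scale of $\ell$ is the crucial balance: it is small enough that the H\"older regularity of $\sigma_0^2$ and $s^2$ keeps their variation within a block negligible, $(\ell h)^\alpha = o(v_\eps)$ under the hypothesis $h_0\thicksim \eps^{-p}$ with $p<1-(2\alpha)^{-1}$, yet large enough that the block-wise parametric Fisher information $(\ell/2)\,I(\theta_0)$ with $\theta_0 = \sigma_0^2(t_m)$ diverges and produces a nondegenerate Gaussian limit at the nonparametric rate $v_\eps$.

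Second, on each sub-block I would freeze $\sigma_0^2$ and $s^2$ at the block midpoint $t_m$ and treat ${\cal E}_{3,m}^{odd}$ as $\ell/2$ i.i.d.\ copies of $\PP_{\theta_0+v_\eps s_0^2}$. Parametric LAN of the product Gaussian scale family, combined with the closed form \eqref{EqItheta}, yields in the limit a single Gaussian block observation $N\big(v_\eps s_0^2, ((\ell/2)\,I(\theta_0))^{-1}\big)$. The error from freezing $\sigma_0^2$ and $s^2$ inside a block contributes a squared Hellinger cost bounded by $\ell\,I(\theta_0)\,((\ell h)^\alpha)^2$, which is $o(1)$ per block and summably $o(1)$ globally under the stated conditions on $\alpha$, $v_\eps$ and $h_0$.

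Third, to lift this parametric blockwise LAN to a genuine nonparametric equivalence over all of $[0,1]$, I would apply \cit{GramaNussbaum02} separately on each independent experiment ${\cal E}_{3,m}^{odd}$, viewed as a local nonparametric family in $s^2\in C_\alpha(R)$ indexed by block midpoints, and sum the Le Cam distances by independence across blocks. The block-wise Gaussian shifts then reassemble into a white noise experiment on $[0,1]$ with signal $v_\eps s^2(t)$ and local noise variance $2/(\ell\,I(\sigma_0^2(t)))$ per block of length $\ell h$. Rewriting this in the normalised form $dY_t=A(t)\,s^2(t)\,dt+(2\eps)^{1/2}\,dW_t$ forces
\[
A(t) = v_\eps\sqrt{I(\sigma_0^2(t))/(2h)},
\]
and substituting the large-$h_0$ asymptotics $I(\theta) = \tfrac{h_0}{8\theta^{3/2}}\bigl(1-\tfrac{2}{\theta^{1/2}h_0}+O(e^{-ch_0})\bigr)$ read off from \eqref{EqItheta} together with $h=h_0\eps$ reproduces exactly the coefficient $\tfrac{1}{\sqrt{8}\sigma_0^{3/2}(t)}(1-2/(\sigma_0(t)h_0))^{1/2}$ appearing in \eqref{EqG3loc}; the exponential remainder is absorbed since $h_0\to\infty$.

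The main obstacle will be verifying the Grama--Nussbaum hypotheses for the infinite product Gaussian scale family $\PP_\theta=\bigotimes_{j\ge1}N(0,\theta+h_0^{-2}\pi^2 j^2)$: a uniform LAN expansion, uniform Hellinger differentiability, and uniform exponential control of the local log-likelihood, all valid in $\theta\in[\underline\sigma^2,R]$ and in the localisation centre $\sigma_0^2\in{\cal S}(\alpha,R,\underline\sigma^2)$. The infinite product requires careful truncation in $j$ together with Hilbert--Schmidt norm bounds on the differences of covariance operators, as flagged in the Introduction and collected in Section \ref{SecPrelim}. A secondary compatibility check is that $h_0\thicksim\eps^{-p}$ with $p<1-(2\alpha)^{-1}$ is consistent with the earlier requirement $h^\alpha=o(\eps^{1/2})$ of Proposition \ref{PropPiecewiseConstant}, so that the reduction through ${\cal E}_2$ remains legitimate.
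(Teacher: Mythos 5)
Your step (3) --- applying the Grama--Nussbaum equivalence blockwise to the independent experiments ${\cal E}_{3,m}^{odd}$ and recombining via \eqref{EqHellProd} and Brown--Low --- is exactly the paper's route. The genuine problem is step (2). You propose to freeze $\sigma_0^2$ (and $s^2$) at the block midpoint $t_m$ and treat the block as $\ell/2$ i.i.d.\ copies of $\PP_{\theta_0+v_\eps s_0^2}$, asserting that the freezing cost $\ell I(\theta_0)(\ell h)^{2\alpha}$ per block is summably $o(1)$. It is not: freezing the \emph{centre} $\sigma_0^2$ perturbs the parameter by $O((\ell h)^\alpha)$ per observation (with no factor $v_\eps$), so the total squared Hellinger distance between the true product law and the frozen one is of order $h^{-1}I(\theta)(\ell h)^{2\alpha}\thicksim\eps^{-1}(\ell h)^{2\alpha}$; with $\ell h\thicksim\eps v_\eps^{-2}$ and $v_\eps=\eps^{\alpha/(2\alpha+1)}\log(\eps^{-1})$ this equals $\eps^{-1/(2\alpha+1)}(\log\eps^{-1})^{-4\alpha}\to\infty$, so the two product laws separate completely and the i.i.d.\ reduction is inadmissible. (Freezing only $s^2$ would carry the extra factor $v_\eps^2$ and be harmless; the centre may only be ``frozen'' in the \emph{limiting} Gaussian experiment, where the discretisation error $\eps^{-1}v_\eps^2(\ell h)^{2\alpha}\to0$.) The paper avoids this entirely: Theorems 5.2 and 6.1 of Grama--Nussbaum are applied to each block as a genuine nonparametric regression with varying centre, yielding directly the Gaussian regression $Y_k=v_\eps s^2(kh)+I(\sigma_0^2(kh))^{-1/2}\gamma_k$ at every design point, with no i.i.d.\ step in between.

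Two further points. First, ``summing the Le Cam distances by independence'' over $(\ell h)^{-1}\to\infty$ blocks requires the quantitative per-block bound $H^2\lesssim\ell^{-2\rho}$ for arbitrary $\rho<1$ from Grama--Nussbaum, together with the check that $(\ell h)^{-1}\ell^{-2\rho}\thicksim\eps^{-1}v_\eps^2h_0^{2\rho}v_\eps^{4\rho}\to0$, which reduces to $(\rho-\alpha)/(\alpha(2\alpha+1))>0$; this is where $\rho$ close to $1$ and the restriction on $p$ actually bite, and you omit it. Second, your normalisation $A(t)=v_\eps\sqrt{I(\sigma_0^2(t))/(2h)}$ against noise $(2\eps)^{1/2}dW_t$ should read $A(t)=v_\eps\sqrt{I(\sigma_0^2(t))/h_0}$ (equivalently, the intermediate model carries noise level $(2h)^{1/2}$); only then does \eqref{EqItheta} produce the coefficient $\tfrac{1}{\sqrt{8}}\sigma_0^{-3/2}(1-2/(\sigma_0h_0))^{1/2}$. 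On the positive side, you correctly identify the verification of conditions R1--R3 uniformly in $h_0\to\infty$, after rescaling the score by $h_0^{-1/2}$, as the main technical obstacle.
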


Note that in this model, combining even and odd indices $k$, we can already infer the LAN-result by \cit{GloterJacodI}, but we still face a second order term of order $h_0^{-1}v_\eps$ in the drift. This term is asymptotically negligible only if it is of smaller order than the noise level $\eps^{1/2}$. To be able to choose $h_0$ sufficiently large, we have to require a larger H\"older smoothness of the volatility.

\begin{corollary}\label{CorE1G4loc}
Assume $\alpha>\frac{1+\sqrt{17}}{8}\approx 0.64$, $\underline\sigma^2>0$ and $(2h)^{-1}\in\N$. Then observing $\{y_{j,2k+1}\,|\,j\ge 1, k=0,\ldots,(2h)^{-1}-1\}$ in experiment ${\cal E}_{2,loc}$ is asymptotically equivalent to the local Gaussian shift experiment ${\cal G}_{4,loc}$ of observing
\begin{equation}\label{EqG4loc}
dY_t=\frac{1}{\sqrt{8}\sigma_0^{3/2}(t)}v_\eps s^2(t)\,dt+ (2\eps)^{1/2}dW_t,\quad t\in [0,1],
\end{equation}
where the unknown $s^2$ and all parameters are the same as in ${\cal E}_{2,loc}$. The Le Cam distance tends to zero uniformly over the center of localisation $\sigma_0^2\in{\cal S}(\alpha,R,\underline{\sigma}^2)$.

The same asymptotic equivalence result holds true for observing $\{y_{j,2k}\,|\,j\ge 1, k=0,\ldots,(2h)^{-1}-1\}$ in experiment ${\cal E}_{2,loc}$.
\end{corollary}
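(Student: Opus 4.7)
My plan is to chain the corollary onto Proposition \ref{PropE1G3loc} by a direct total variation comparison between the two Gaussian-shift experiments ${\cal G}_{3,loc}$ and ${\cal G}_{4,loc}$, which live on a common sample path space with identical noise $(2\eps)^{1/2}dW_t$. By Girsanov and Pinsker, two such Gaussian shifts with drifts $f_1,f_2$ satisfy
\[ \|P_{f_1}-P_{f_2}\|_{TV} \le \frac{1}{2\sqrt{2\eps}} \|f_1-f_2\|_{L^2([0,1])}, \]
so $\Delta({\cal G}_{3,loc},{\cal G}_{4,loc})$ is bounded by $\eps^{-1/2}$ times the supremum, over $s^2\in C_\alpha(R)$ and $\sigma_0^2\in{\cal S}$, of the $L^2$-norm of the drift difference
\[ g(t) = \frac{v_\eps s^2(t)}{\sqrt{8}\,\sigma_0^{3/2}(t)} \Big(1 - \sqrt{1 - 2/(\sigma_0(t) h_0)}\Big). \]

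The elementary inequality $1-\sqrt{1-x} \le x$ on $x\in[0,1]$, together with $\sigma_0\ge\underline{\sigma}$ and $\|s^2\|_\infty \le R$, yields $\|g\|_\infty \lesssim v_\eps/h_0$ uniformly in the localisation centre and in $s^2$, and hence a TV bound of order $\eps^{-1/2}v_\eps h_0^{-1}$. The constant depends only on $R$ and $\underline{\sigma}$, so the uniformity claimed in the corollary is automatic.

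It remains to choose $h_0=\eps^{-p}$ subject to two competing constraints: Proposition \ref{PropE1G3loc} requires $p\in(0,1-(2\alpha)^{-1})$, while the TV bound $\eps^{-1/2}v_\eps h_0^{-1}=\eps^{p+\alpha/(2\alpha+1)-1/2}\log(\eps^{-1})\to 0$ requires $p>1/(2(2\alpha+1))$. Both ranges have common intersection exactly when $1/(2(2\alpha+1))<1-1/(2\alpha)$, which rearranges to $4\alpha^2-\alpha-1>0$, i.e.\ $\alpha>(1+\sqrt{17})/8$: precisely the hypothesis. The even-indexed case is symmetric.

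The one substantive point to get right is the arithmetic pinning down the threshold $(1+\sqrt{17})/8$. This reflects the genuine tension between pushing $h_0$ high enough that the second-order drift $v_\eps/h_0$ becomes negligible relative to noise level $\eps^{1/2}$ and keeping $h=h_0\eps$ small enough that the piecewise-constant approximation inherited from Proposition \ref{PropPiecewiseConstant} (through Proposition \ref{PropE1G3loc}) remains valid, i.e.\ $h^\alpha=o(\eps^{1/2})$. Apart from the bookkeeping, all steps are one-line applications of Pinsker's inequality and an elementary scalar bound.
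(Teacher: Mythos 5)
Your proposal is correct and follows essentially the same route as the paper: both compare ${\cal G}_{3,loc}$ and ${\cal G}_{4,loc}$ via the Cameron--Martin/Girsanov formula, bound the drift difference by $v_\eps/h_0$ using $1-\sqrt{1-x}\le x$, and check that the resulting constraint $p>\frac{1}{4\alpha+2}$ is compatible with the constraint $p<\frac{2\alpha-1}{2\alpha}$ from Proposition \ref{PropE1G3loc} exactly when $\alpha>\frac{1+\sqrt{17}}{8}$. The only cosmetic difference is that you pass from the $L^2$ drift distance to total variation via Pinsker, while the paper states the Kullback--Leibler divergence and then invokes the same implication.
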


\begin{proof}
For $\alpha>\frac{1+\sqrt{17}}{8}$ the choice of $h_0=\eps^{-p}$ for some $p\in (\frac{1}{4\alpha+2},\frac{2\alpha-1}{2\alpha})$ is possible and ensures that $h^{\alpha}=o(\eps^{1/2})$ holds as well as $h_0^{-2}=o(v_\eps^{-2}\eps)$. Therefore the Kullback-Leibler divergence between the observations in ${\cal G}_3^{loc}$ and in ${\cal G}_4^{loc}$ evaluates by the Cameron-Martin (or Girsanov) formula to
\[ \eps^{-1}\int_0^1 \frac{1}{8\sigma_0^{3}(t)}\Big(\Big(1
-\frac{2}{\sigma_0(t)h_0}\Big)^{1/2}-1\Big)^2v_\eps^2 s^4(t)\,dt\lesssim \eps^{-1}h_0^2v_\eps^2.
\]
Consequently, the Kullback-Leibler and thus also the total variation distance tends to zero.
\end{proof}

In a last step we find local experiments ${\cal G}_{5,loc}$, which are asymptotically equivalent to ${\cal G}_{4,loc}$ and do not depend on the center of localisation $\sigma_0^2$. To this end we use a variance-stabilizing transform, based on the Taylor expansion
\[
\sqrt{2}x^{1/4}=\sqrt{2}x_0^{1/4}+\tfrac{1}{\sqrt{8}}x_0^{-3/4}(x-x_0)+O((x-x_0)^2)
\]
which holds uniformly over $x,x_0$ on any compact subset of $(0,\infty)$. Inserting $x=\sigma^2(t)=\sigma_0^2(t)+v_\eps s^2(t)$ and $x_0=\sigma_0^2$ from our local model, we obtain
\begin{equation}\label{EqIthetaTaylor}
\sqrt{2\sigma(t)}=\sqrt{2\sigma_0(t)}+\tfrac{1}{\sqrt{8}}\sigma_0^{-3/2}(t)v_\eps s^2(t)+O(v_\eps^2).
\end{equation}

Since $v_\eps^2=o(\eps^{1/2})$ holds for $\alpha>1/2$, we can add the uninformative signal $\sqrt{2}\sigma_0^{1/2}(t)$ to $Y$ in ${\cal G}_{4,loc}$, replace the drift by
$\sqrt{2}\sigma^{1/2}(t)$ and still keep convergence of the total variation distance, compare the preceding proof. Consequently, from Corollary \ref{CorE1G4loc} we obtain the following result.

\begin{corollary}\label{CorE1G5loc}
Assume $\alpha>\frac{1+\sqrt{17}}{8}\approx 0.64$, $\underline\sigma^2>0$ and $(2h)^{-1}\in\N$.  Then observing $\{y_{j,2k+1}\,|\,j\ge 1, k=0,\ldots,(2h)^{-1}-1\}$ in the experiment ${\cal E}_{2,loc}$ is asymptotically equivalent to the local Gaussian shift experiment ${\cal G}_{5,loc}$ of observing
\begin{equation}\label{EqG5loc}
dY_t=\sqrt{2\sigma(t)}\,dt+ (2\eps)^{1/2}\,dW_t,\quad t\in [0,1],
\end{equation}
where the unknown is $\sigma^2=\sigma_0^2+v_\eps s^2$ and all parameters are the same as in ${\cal E}_{2,loc}$. The Le Cam distance tends to zero uniformly over the center of localisation $\sigma_0^2\in{\cal S}(\alpha,R,\underline{\sigma}^2)$.

The same asymptotic equivalence result holds true for observing $\{y_{j,2k}\,|\,j\ge 1, k=0,\ldots,(2h)^{-1}-1\}$ in experiment ${\cal E}_{2,loc}$.
\end{corollary}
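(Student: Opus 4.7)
The plan is to chain the already-established equivalence in Corollary \ref{CorE1G4loc} to ${\cal G}_{5,loc}$ in two deterministic steps, exploiting the variance-stabilizing Taylor expansion \eqref{EqIthetaTaylor}. The first is a bijective, $\sigma_0$-measurable path transformation; the second replaces the resulting affine drift by its nonlinear counterpart, and the remaining work is a straightforward Girsanov comparison of two Gaussian shifts with the same noise level.

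First I would add the deterministic signal $\sqrt{2\sigma_0(t)}$ to the observed path $Y$ in ${\cal G}_{4,loc}$. Since the center of localisation $\sigma_0$ is known within the localized experiment, this is a pathwise bijection and therefore preserves the Le Cam distance. The transformed experiment has drift
\[
\sqrt{2\sigma_0(t)}+\tfrac{1}{\sqrt{8}}\sigma_0^{-3/2}(t)\,v_\eps s^2(t),\qquad t\in[0,1],
\]
and noise level $(2\eps)^{1/2}$. Applying the Taylor expansion \eqref{EqIthetaTaylor} with $x=\sigma^2(t)=\sigma_0^2(t)+v_\eps s^2(t)$ and $x_0=\sigma_0^2(t)$, and using $\sigma_0^2\ge\underline\sigma^2>0$ together with $\norm{s^2}_\infty\le R$, this drift differs from that of ${\cal G}_{5,loc}$, namely $\sqrt{2\sigma(t)}$, by an error of order $v_\eps^2$, uniformly in $t\in[0,1]$ and in the parameters of the model.

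The Cameron-Martin formula then bounds the Kullback-Leibler divergence between the shifted experiment and ${\cal G}_{5,loc}$ by
\[
\frac{1}{4\eps}\int_0^1\Bigl(\sqrt{2\sigma(t)}-\sqrt{2\sigma_0(t)}-\tfrac{1}{\sqrt{8}}\sigma_0^{-3/2}(t)\,v_\eps s^2(t)\Bigr)^2 dt\lesssim \eps^{-1}v_\eps^4.
\]
Pinsker's inequality converts this into a total variation bound of the same order, so asymptotic equivalence reduces to the rate condition $v_\eps^4=o(\eps)$, equivalently $v_\eps^2=o(\eps^{1/2})$. Substituting $v_\eps=\eps^{\alpha/(2\alpha+1)}\log(\eps^{-1})$ shows this is equivalent to $\alpha>1/2$, which is amply ensured by the hypothesis $\alpha>(1+\sqrt{17})/8$.

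The only technical point worth watching is uniformity: because $\sigma_0^2$ and $s^2$ range in a bounded $C^\alpha$-ball with $\sigma_0^2\ge\underline\sigma^2>0$, the constant in the Taylor remainder depends only on $R$ and $\underline\sigma^2$, so the total variation bound and hence the Le Cam convergence are uniform in the center of localisation $\sigma_0^2\in{\cal S}(\alpha,R,\underline\sigma^2)$. The odd/even index version is proved identically to the odd/odd one.
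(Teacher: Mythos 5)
Your proposal is correct and follows essentially the same route as the paper: adding the known signal $\sqrt{2\sigma_0(t)}$ as a bijective path transformation, invoking the Taylor expansion \eqref{EqIthetaTaylor} to identify the drift with $\sqrt{2\sigma(t)}$ up to a uniform $O(v_\eps^2)$ error, and bounding the Kullback--Leibler divergence by $\eps^{-1}v_\eps^4\to 0$ under $\alpha>1/2$. The only cosmetic difference is your explicit appeal to Pinsker's inequality where the paper simply notes that convergence of the Kullback--Leibler divergence implies convergence in total variation.
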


\section{Globalisation}\label{SecGlob}

The globalisation now basically follows the usual route, first established by \cit{Nussbaum}. Essential for us is to show that observing $(y_{jk})$ for $j\ge 1$ is asymptotically sufficient in ${\cal E}_2$. Then we can split the white noise observation experiment ${\cal E}_2$ into two independent sub-experiments obtained from $(y_{jk})$ for $k$ odd and $k$ even, respectively. Usually, a white noise experiment can be split into two independent subexperiments with the same drift and an increase by $\sqrt{2}$ in the noise level. Here, however, this does not work since the two diffusions in the {\it random} drift remain the same and thus independence fails.

Let us introduce the $L^2$-normalized step functions
\begin{align*}
\phi_{0,k}(t) &:=(2h)^{-1/2}\big({\bf 1}_{[(k-1)h,kh]}(t)-{\bf 1}_{[kh,(k+1)h]}(t)\big),\quad k=1,\ldots,h^{-1}-1,\\
\phi_{0,0}(t) &:=h^{-1/2}{\bf 1}_{[0,h]}(t).
\end{align*}
We obtain a normalized complete basis $(\phi_{jk})_{j\ge 0,0\le k\le h^{-1}-1}$ of $L^2([0,1])$ such that observing $Y$ in experiment ${\cal E}_2$ is equivalent to observing
\[ y_{jk}:=\int_0^1\phi_{jk}(t)\,dY_t,\quad j\ge 0,\,k=0,\ldots,h^{-1}-1.
\]
Calculating the Fourier series, we can express the tent function $\Phi_{0,k}$ with $\Phi_{0,k}'=\phi_{0,k}$ and $\Phi_{0,k}(1)=0$ as an $L^2$-convergent series over the dilated sine functions $\Phi_{jk}$ and $\Phi_{j,k-1}$, $j\ge 1$:
\begin{equation}\label{EqPhi0k}
\Phi_{0,k}(t)=\sum_{j\ge 1}(-1)^{j+1}\Phi_{j,k-1}(t)+\sum_{j\ge 1}\Phi_{jk}(t),\quad k=1,\ldots, h^{-1}-1.
\end{equation}
We also have $\Phi_{0,0}(t)=2\sum_{j\ge 1}\Phi_{j,0}(t)$.
By partial integration, this implies (with $L^2$-convergence)
\begin{align*}
&\beta_{0,k} :=\scapro{\phi_{0,k}}{X}
=-\int_0^1 \Phi_{0,k}(t)\,dX(t)
=\sum_{j\ge 1}(-1)^{j+1}\beta_{j,k-1}+\sum_{j\ge 1}\beta_{jk},\\
&\text{where }\beta_{jk}:=\scapro{\phi_{jk}}{X}
\end{align*}
for $k\ge 1$ and similarly $\beta_{0,0}=2\sum_{j\ge 1}\beta_{j,0}$. This means that the signal $\beta_{0,k}$ in $y_{0,k}$ can be perfectly reconstructed from the signals in the $y_{j,k-1}$, $y_{jk}$.
For jointly Gaussian random variables we obtain the conditional law in ${\cal E}_2$
\[ {\cal L}(\beta_{jk}\,|\,y_{jk})
=N\Big(\frac{\Var(\beta_{jk})}{\Var(y_{jk})}y_{jk},
\frac{\eps^2\Var(\beta_{jk})}{\Var(y_{jk})}\Big)
\]

Given the results by \cit{Stone82} and our less informative Gaussian shift experiment ${\cal G}_1$ for $\alpha>1/2$, $\underline{\sigma}^2>0$, there is an estimator $\hat\sigma^2_\eps$ based on $(y_{1,k})_k$ in ${\cal E}_2$ with
\begin{equation}\label{EqConsistentEst}
 \lim_{\eps\to 0}\inf_{\sigma^2\in{\cal S}} \PP_{\sigma^2,\eps}(\norm{\hat\sigma^2_\eps-\sigma^2}_\infty\le Rv_\eps)=1,
\end{equation}
where $v_\eps=\eps^{\alpha/(2\alpha+1)}\log(\eps^{-1})$ as in the definitions of the localized experiments.

We can thus generate independent $N(0,1)$-distributed random variables $\rho_{jk}$ to construct from $(y_{jk})_{j\ge 1,k}$
\[ \tilde\beta_{jk}:=\frac{\Var_\eps(\beta_{jk})}{\Var_\eps(y_{jk})}y_{jk}
+\frac{\eps\Var_\eps(\beta_{jk})^{1/2}}{\Var_\eps(y_{jk})^{1/2}}\rho_{jk},
\]
where the variance $\Var_\eps$ is the expression for $\Var$ where all unknown values $\sigma^2(kh)$ are replaced by the estimated values $\hat\sigma^2_\eps(kh)$. From this we can generate artificial observations $(\tilde y_{0,k})$ such that the conditional law
${\cal L}((\tilde y_{0,k})_k\,|\,(\tilde\beta_{j,k})_k)$ coincides with ${\cal L}(( y_{0,k})_k\,|\,(\beta_{0,k})_k)$, which is just a multivariate normal law with mean zero and tri-diagonal covariance matrix $\eps^2(\scapro{\phi_{0,k}}{\phi_{0,k'}})_{k,k'}$.

In Section \ref{ProofPropE1locyjk} we shall prove that the Hellinger distance between the families of centered Gaussian random variables ${\cal Y}:=\{y_{jk}\,|\,j\ge 0,\,k=0,\ldots,h^{-1}-1\}$ and $\tilde{\cal Y}:=\{\tilde y_{0,k}\,|\,k=0,\ldots,h^{-1}-1\}\cup\{y_{jk}\,|\,j\ge 1,\,k=0,\ldots,h^{-1}-1\}$
tends to zero, provided $h_0^{-1}v_\eps^2=o(\eps)$, which is possible when $\alpha>\frac{1+\sqrt{5}}{4}$ with the choice $h_0=\eps^{-p}$ for some $p\in (\frac{1}{2\alpha+1},\frac{2\alpha-1}{2\alpha})$.

\begin{proposition}\label{PropE1locyjk}
Assume $\alpha>\frac{1+\sqrt{5}}{4}\approx 0.81$, $\underline\sigma^2>0$ and $h^{-1}$ an even integer. Then the experiment ${\cal E}_2$ is asymptotically equivalent to the product experiment ${\cal E}_{2,odd}\otimes{\cal E}_{2,even}$ where ${\cal E}_{2,odd}$ is obtained from the observations $\{y_{j,2k+1}\,|\,j\ge 1, k=0,\ldots,(2h)^{-1}-1\}$ and ${\cal E}_{2,even}$ from the observations $\{y_{j,2k}\,|\,j\ge 1, k=0,\ldots,(2h)^{-1}-1\}$ in experiment ${\cal E}_2$.
\end{proposition}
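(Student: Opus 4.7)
To prove Proposition \ref{PropE1locyjk}, I would establish the chain
\[
{\cal E}_2 \simeq {\cal E}_2^{(j\ge 1)} = {\cal E}_{2,odd}\otimes{\cal E}_{2,even},
\]
where ${\cal E}_2^{(j\ge 1)}$ is the subexperiment of observing only $\{y_{jk}\}_{j\ge 1,k}$ in ${\cal E}_2$. The second equality is an exact factorisation: for $j\ge 1$, $\phi_{jk}$ is supported on $[kh,(k+1)h]$ and has integral zero, so decomposing $X_t^h=X_{kh}^h+\sigma(kh)(B_t-B_{kh})$ on that interval yields
\[
y_{jk}=\sigma(kh)\int_{kh}^{(k+1)h}\phi_{jk}(t)(B_t-B_{kh})\,dt+\eps\int_{kh}^{(k+1)h}\phi_{jk}(t)\,dW_t,
\]
which depends only on the $B$- and $W$-increments on the single block $[kh,(k+1)h]$; hence the odd- and even-$k$ families are independent.

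\textbf{Asymptotic sufficiency via randomisation.} I would implement exactly the construction sketched in the excerpt: obtain $\hat\sigma^2_\eps$ from \eqref{EqConsistentEst} (using only $(y_{1,k})_k$), form the plug-in variances $\Var_\eps(\beta_{jk})=h^2 j^{-2}\pi^{-2}\hat\sigma_\eps^2(kh)$ and $\Var_\eps(y_{jk})=\Var_\eps(\beta_{jk})+\eps^2$, draw independent $\rho_{jk}\sim N(0,1)$, set $\tilde\beta_{jk}=a_{jk}y_{jk}+b_{jk}\rho_{jk}$ with the ratios $a_{jk},b_{jk}$ as in the excerpt, $\tilde\beta_{0,k}=\sum_{j\ge 1}(-1)^{j+1}\tilde\beta_{j,k-1}+\sum_{j\ge 1}\tilde\beta_{jk}$, and finally $\tilde y_{0,k}=\tilde\beta_{0,k}+\tilde\xi_{0,k}$ where $(\tilde\xi_{0,k})$ is an independent centered Gaussian vector with the exact tri-diagonal covariance $\eps^2(\scapro{\phi_{0,k}}{\phi_{0,k'}})$. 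Replacing $\Var_\eps$ by the true $\Var$ in $(a_{jk},b_{jk})$ makes $(\tilde\beta_{jk},y_{jk})$ Gaussian-equidistributed with $(\beta_{jk},y_{jk})$ by the standard bivariate Gaussian conditioning formula; combined with independence across $(j,k)$ for $j\ge 1$ and the Fourier identity \eqref{EqPhi0k}, this sanity check shows ${\cal L}(\tilde{\cal Y})={\cal L}({\cal Y})$ exactly in the idealised case, so only the plug-in error needs to be quantified.

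\textbf{Hellinger distance bound.} Conditioning on the event $\{\norm{\hat\sigma_\eps^2-\sigma^2}_\infty\le Rv_\eps\}$, whose probability tends to one uniformly by \eqref{EqConsistentEst}, both ${\cal L}({\cal Y})$ and ${\cal L}(\tilde{\cal Y})$ are centered Gaussian with covariance matrices $\Sigma$ and $\tilde\Sigma$ that differ only through the plug-in. Using the Gaussian Hellinger bound from Section \ref{SecPrelim} in Hilbert-Schmidt form, schematically $H^2\lesssim\norm{\Sigma^{-1/2}(\Sigma-\tilde\Sigma)\Sigma^{-1/2}}_{HS}^2$, only the block indexed by $\{\tilde y_{0,k}\}$ together with its cross-covariances to $\{y_{jk}\}_{j\ge 1}$ contributes, since the $j\ge 1$ block is identical in both laws. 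The diagonal entry $|\Var(\tilde y_{0,k})-\Var(y_{0,k})|$ is $O(h^2 v_\eps)$, obtained by summing $|\Var(\tilde\beta_{jk})-\Var(\beta_{jk})|\lesssim h^2 j^{-2}v_\eps$ over $j\ge 1$ via $\sum j^{-2}<\infty$; after normalisation by $\Var(y_{0,k})\thicksim h^2$, this is $O(v_\eps)$, and squaring and summing over the $h^{-1}$ locations yields $h^{-1}v_\eps^2=\eps^{-1}h_0^{-1}v_\eps^2$. Analogous estimates for the tri-diagonal off-diagonals and for the cross-covariances to $\{y_{jk}\}_{j\ge 1}$, the latter controlled by the series identity behind \eqref{EqItheta} with $I(\theta)\thicksim h_0$, contribute at most the same order. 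The resulting Hellinger bound vanishes precisely when $h_0^{-1}v_\eps^2=o(\eps)$; with $h_0=\eps^{-p}$, the joint constraints $h^\alpha=o(\eps^{1/2})$ (from Proposition \ref{PropPiecewiseConstant}) and $h_0^{-1}v_\eps^2=o(\eps)$ force $p\in(\frac{1}{2\alpha+1},\frac{2\alpha-1}{2\alpha})$, an interval non-empty iff $4\alpha^2-2\alpha-1>0$, i.e.\ $\alpha>\frac{1+\sqrt{5}}{4}$.

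\textbf{Main obstacle.} The central difficulty is the Hilbert-Schmidt estimate. The perturbation $\Sigma-\tilde\Sigma$ is not diagonal: the components $\tilde y_{0,k}$ couple neighbouring locations through \eqref{EqPhi0k}, and the reconstruction error in $\tilde\beta_{0,k}$ is itself a sum over $j\ge 1$ of plug-in errors weighted by the conditioning ratios. Tracking the cancellations built into the Gaussian conditioning formula, and using the Fourier-series identity of \eqref{EqItheta} to evaluate the sum over $j$, is what delivers the sharp bound $\eps^{-1}h_0^{-1}v_\eps^2$ and thereby the regularity threshold $\alpha>\frac{1+\sqrt{5}}{4}$; a coarser treatment of the signal and noise contributions separately would lose the factor of $h_0^{-1}$ and fail to vanish for any admissible $h_0$.
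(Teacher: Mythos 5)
Your overall architecture is the right one and matches the paper: exact factorisation of the $j\ge 1$ observations into independent odd/even blocks, randomised reconstruction of the $y_{0,k}$ from $(y_{jk})_{j\ge 1}$ via the Fourier identity \eqref{EqPhi0k} and the Gaussian conditioning formula, a Hellinger/Hilbert--Schmidt bound of order $h_0^{-1}\eps^{-1}v_\eps^2$, and the resulting constraint $p\in(\frac{1}{2\alpha+1},\frac{2\alpha-1}{2\alpha})$ giving $\alpha>\frac{1+\sqrt{5}}{4}$. However, your quantitative step contains two genuine gaps. First, the unconditional law of $\tilde{\cal Y}$ is \emph{not} centered Gaussian: the plug-in coefficients $\Var_\eps(\beta_{jk})/\Var_\eps(y_{jk})$ depend on $\hat\sigma^2_\eps$, hence on the data $(y_{1,k})_k$, so $\tilde y_{0,k}$ is a Gaussian mixture. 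Conditioning on the \emph{event} $\{\norm{\hat\sigma^2_\eps-\sigma^2}_\infty\le Rv_\eps\}$ does not restore Gaussianity. The paper instead conditions on the whole family $(y_{jk})_{j\ge 1,k}$, under which both $(y_{0,k})_k$ and $(\tilde y_{0,k})_k$ are genuinely Gaussian, but then they differ in their (conditional) \emph{means} as well as their covariances, so one must use the two-term bound \eqref{EqHellGaussGeneral} and afterwards take expectation over the conditioning variables; your covariance-only comparison skips the mean term entirely.

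Second, your numerical bookkeeping is off in two places that happen to cancel. The perturbation of $\Var(\tilde y_{0,k})$ is not $O(h^2v_\eps)$: the relevant Lipschitz constant is that of $G(z)=\norm{\Phi_{jk}}^2 z/(\norm{\Phi_{jk}}^2z+\eps^2)$, whose derivative carries the damping factor $\eps^2/(\norm{\Phi_{jk}}^2z+\eps^2)$; summing $\min(1,\eps^2j^2/h^2)\cdot h^2 j^{-2}v_\eps$ over $j$ gives $O(\eps h v_\eps)=O(\eps^2 h_0 v_\eps)$, a factor $h_0$ smaller than your bound. Conversely, normalising by the marginal variance $\Var(y_{0,k})\thicksim h^2$ is not legitimate in the Hilbert--Schmidt bound: since $\beta_{0,k}$ is a deterministic function of the $(\beta_{jk})_{j\ge 1}$, the $y_{0,k}$ directions carry only conditional variance $\thicksim \eps h$ relative to the full covariance operator, and it is $\eps h$ (a factor $h_0$ smaller than $h^2$) that must appear in $\Sigma^{-1/2}$. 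The two factors of $h_0$ cancel to give your (correct) final rate $h^{-1}v_\eps^2$, but neither intermediate step is valid as stated, and with the correct perturbation and your normalisation (or vice versa) the argument would either lose or spuriously gain a factor $h_0$ and fail. The cancellation you invoke informally in your last paragraph is precisely what needs to be made rigorous; the paper does this by working with the conditional variances throughout ($\Sigma\gtrsim\eps h\,\Id$ and the bound $(\Var(\beta_{jk})/\Var(y_{jk})-\Var_\eps(\beta_{jk})/\Var_\eps(y_{jk}))^2\lesssim v_\eps^2\min(h_0/j,j/h_0)^4$).
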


This key result permits to globalize the local result. In the sequel we always assume $\alpha>\frac{1+\sqrt{5}}{4}$ and $\underline{\sigma}^2>0$. We start with the asymptotic equivalence between ${\cal E}_2$ and ${\cal E}_{2,odd}\otimes{\cal E}_{2,even}$. Using again an estimator $\hat\sigma^2_\eps$ in ${\cal E}_{2,odd}$ satisfying \eqref{EqConsistentEst} we can localize the second factor ${\cal E}_{2,even}$ around $\hat\sigma_\eps^2$ and therefore by Corollary \ref{CorE1G5loc} replace it by experiment ${\cal G}_{5,loc}$, see Theorem 3.2 in \cit{Nussbaum} for a formal proof. Since ${\cal G}_{5,loc}$ does not depend on the center $\hat\sigma_\eps^2$, we conclude that ${\cal E}_2$ is asymptotically equivalent to the product experiment ${\cal E}_{2,odd}\otimes{\cal G}_5$ where ${\cal G}_5$ has the same parameters as ${\cal E}_2$ and is given by observing $Y$ in \eqref{EqG5loc}. Now we use an estimator $\hat\sigma^2_\eps$ in ${\cal G}_5$ satisfying \eqref{EqConsistentEst}, whose existence is ensured by \cit{Stone82}, to localize ${\cal E}_{2,odd}$.  Corollary \ref{CorE1G5loc} then allows again to replace the localized ${\cal E}_{2,odd}$-experiment by ${\cal G}_5$ such that ${\cal E}_2$ is asymptotically equivalent to the product experiment ${\cal G}_5\otimes{\cal G}_5$. Finally, taking the mean of the independent observations \eqref{EqG5loc} in both factors, which is a sufficient statistics, (or, abstractly, due to identical likelihood processes) we see that ${\cal G}_5\otimes {\cal G}_5$ is equivalent to the experiment ${\cal G}_0$ of observing
$dY_t=\sqrt{2\sigma(t)}\,dt+ \sqrt{\eps}\,dW_t$, $t\in [0,1]$.
Our final result then follows from the asymptotic equivalence between ${\cal E}_0$ and ${\cal E}_1$ as well as between ${\cal E}_1$ and ${\cal E}_2$.

\begin{theorem}\label{ThmE0G0}
Assume $\alpha>\frac{1+\sqrt{5}}{4}\approx 0.81$ and $\delta,\underline\sigma^2,R>0$. Then the regression experiment ${\cal E}_0(n,\delta,\alpha,R,\underline{\sigma}^2)$ is for $n\to\infty$ asymptotically equivalent to the Gaussian shift experiment ${\cal G}_0(\delta n^{-1/2},\alpha,R,\underline{\sigma}^2)$ of observing
\begin{equation}\label{EqG0}
dY_t=\sqrt{2\sigma(t)}\,dt+ \delta^{1/2}n^{-1/4}\,dW_t,\quad t\in [0,1],
\end{equation}
for $\sigma^2\in {\cal S}(\alpha,R,\underline{\sigma}^2)$.
\end{theorem}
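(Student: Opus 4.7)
The plan is to chain together the partial equivalences of Sections \ref{SecRegWNM}--\ref{SecGlob} via a Nussbaum-type localisation--globalisation sandwich, with Proposition \ref{PropE1locyjk} playing the role of asymptotic sufficiency. The chain consists of four links that must be stitched together under a single admissible choice of the scaling parameters.

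First I would reduce the discrete regression model to the piecewise-constant white noise model. Theorem \ref{ThmRegression} gives $\mathcal{E}_0 \sim \mathcal{E}_1$ at noise level $\eps = \delta n^{-1/2}$, and Proposition \ref{PropPiecewiseConstant} gives $\mathcal{E}_1 \sim \mathcal{E}_2$ provided the block size satisfies $h^\alpha = o(\eps^{1/2})$. Writing $h = h_0 \eps$ with $h_0 = \eps^{-p}$, this is the constraint $p < (2\alpha-1)/(2\alpha)$, which in combination with the other constraints below is precisely what will force $\alpha > \tfrac{1+\sqrt{5}}{4}$.

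Second, apply Proposition \ref{PropE1locyjk} to split $\mathcal{E}_2$ into the independent product $\mathcal{E}_{2,odd}\otimes \mathcal{E}_{2,even}$. Use the first factor $\mathcal{E}_{2,odd}$ to construct a preliminary estimator $\hat\sigma^2_\eps$ satisfying the uniform consistency property \eqref{EqConsistentEst}; such an estimator exists because $\mathcal{G}_1$ (which by Theorem \ref{ThmEquivE1G1} is dominated by $\mathcal{E}_1$, hence by the projected observations $(y_{1,2k+1})_k$ extracted from $\mathcal{E}_{2,odd}$) is a standard nonparametric regression on $\log(\sigma^2+h_0^{-2}\pi^2)$ in which Stone's \cit{Stone82} sup-norm minimax rate $v_\eps$ is attainable. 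Condition on $\{\|\hat\sigma^2_\eps-\sigma^2\|_\infty \le Rv_\eps\}$ and restrict $\mathcal{E}_{2,even}$ to this local neighbourhood of $\hat\sigma^2_\eps$. Corollary \ref{CorE1G5loc} then provides asymptotic equivalence (uniform in the center) between the localized $\mathcal{E}_{2,even}$ and the localized Gaussian shift $\mathcal{G}_{5,loc}$. The decisive feature is that the drift $\sqrt{2\sigma(t)}$ in $\mathcal{G}_{5,loc}$ depends only on the global $\sigma^2$ and not on the decomposition $\sigma^2 = \sigma_0^2 + v_\eps s^2$, so that patching the local pieces together via the argument of Theorem 3.2 in \cit{Nussbaum} yields the unlocalized Gaussian shift $\mathcal{G}_5$ (with the same parameters as $\mathcal{E}_2$). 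Hence $\mathcal{E}_2 \sim \mathcal{E}_{2,odd}\otimes \mathcal{G}_5$. Repeating the same construction with roles reversed — extract a consistent estimator from $\mathcal{G}_5$ (by \cit{Stone82} applied to $\sqrt{2\sigma}$), use it to localize $\mathcal{E}_{2,odd}$, and invoke Corollary \ref{CorE1G5loc} once more — upgrades this to $\mathcal{E}_2 \sim \mathcal{G}_5\otimes \mathcal{G}_5$. Finally the two factors have identical drift, so the pairwise average of the two observation processes is a sufficient statistic and produces a single Gaussian shift with drift $\sqrt{2\sigma(t)}$ and noise level $\sqrt{\eps}$, which on reinstating $\eps = \delta n^{-1/2}$ is exactly $\mathcal{G}_0$.

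The main obstacle is the bookkeeping that guarantees the auxiliary parameters can be chosen compatibly. A single exponent $p$ in $h_0 = \eps^{-p}$ must simultaneously satisfy $h^\alpha = o(\eps^{1/2})$ from Proposition \ref{PropPiecewiseConstant}, the second-order drift bound $h_0^{-2} = o(v_\eps^{-2}\eps)$ from Corollary \ref{CorE1G5loc}, and the sufficiency bound $h_0^{-1} v_\eps^2 = o(\eps)$ from Proposition \ref{PropE1locyjk}. Checking that the intersection of the admissible ranges is nonempty precisely when $\alpha > \tfrac{1+\sqrt{5}}{4}$ is the quantitative heart of the argument; beyond this, the remainder is the standard Le Cam two-sided deficiency estimate for localisation--globalisation, adapted to the nonlinear drift via the variance-stabilising expansion \eqref{EqIthetaTaylor}.
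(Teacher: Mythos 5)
Your proposal follows exactly the paper's own route: reduce $\mathcal{E}_0\sim\mathcal{E}_1\sim\mathcal{E}_2$ via Theorem \ref{ThmRegression} and Proposition \ref{PropPiecewiseConstant}, split into $\mathcal{E}_{2,odd}\otimes\mathcal{E}_{2,even}$ by Proposition \ref{PropE1locyjk}, apply the Nussbaum localisation with Corollary \ref{CorE1G5loc} twice (once in each direction) to reach $\mathcal{G}_5\otimes\mathcal{G}_5$, and conclude by sufficiency of the mean, with the same compatibility check $p\in\bigl(\tfrac{1}{2\alpha+1},\tfrac{2\alpha-1}{2\alpha}\bigr)$ forcing $\alpha>\tfrac{1+\sqrt{5}}{4}$. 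The argument and the bookkeeping are both correct and match the paper.
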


\section{Discussion}\label{SecDisc}

Our results show that inference for the volatility in the high-frequency observation model under microstructure noise ${\cal E}_0$ is asymptotically as difficult as in the well understood Gaussian shift model ${\cal G}_0$.  Remark that the constructions in \cit{GloterJacodI}, \cit{GloterJacodII} rely on preliminary estimators at the boundary of suitable blocks, while we require $\supp\Phi_{jk}=[kh,(k+1)h]$ to obtain independence among blocks. In this context Proposition \ref{PropE1locyjk} shows asymptotic sufficiency of observing only the pinned process $X_t-\frac{(k+1)h-t}{h}X_{kh}-\frac{t-kh}{h}X_{(k+1)h}$, $t\in [kh,(k+1)h]$, on each block due to $\int (\alpha t+\beta)\phi_{jk}(t)\,dt=0$ for $j\ge 1$, $\alpha,\beta\in\R$. Naturally, the $(\Phi_{jk})_{j\ge 1}$ form exactly the eigenfunctions of the covariance operator of the Brownian bridge.

It is interesting to note that both, model ${\cal E}_0$ and model ${\cal G}_0$, are homogeneous in the sense that factors from the noise (i.e. the $dW_t$-term) can be moved to the drift term and vice versa such that for example high volatility can counterbalance a high noise level $\delta$ or a large observation distance $1/n$. Another phenomenon is that observing ${\cal E}_0$ $m$-times independently, in particular with different realisations of the process $X$, is asymptotically as informative as observing ${\cal E}_0$ with $m^2$ as many observations: both experiments are asymptotically equivalent to $dY_t=\sqrt{2\sigma(t)}dt+m^{1/2}\delta^{1/2}n^{-1/4}dW_t$. Similarly, by rescaling we can treat observations on intervals $[0,T]$ with $T>0$ fixed: Observing $Y_i=X_{iT/n}+\eps_i$, $i=1,\ldots,n$, in ${\cal E}_0$ with $X_t=\int_0^t\sigma(s)\,dB_s$, $t\in [0,T]$, is under the same conditions asymptotically equivalent to observing
\[dY_u=\sqrt{2\sigma(Tu)}\,du+\delta^{1/2}T^{-1/4}n^{-1/4}\,dW_u,\quad u\in [0,1],
\]
or equivalently, 
\[d\tilde Y_v= \sqrt{2\sigma(v)}\,du+\delta^{1/2}T^{1/4}n^{-1/4}\,dW_v,\quad v\in [0,T].
\]
Concerning the various restrictions on the smoothness $\alpha$ of the volatility $\sigma^2$, one might wonder whether the critical index is $\alpha=1/2$ in view of the classical asymptotic equivalence results \cite{BrownLow,Nussbaum}. In our approach,  we still face the second order term in \eqref{EqG3loc} and using the localized results, a much easier globalisation yields for $\alpha>1/2$ only that ${\cal E}_0$ is asymptotically not less informative than observing
\[dY_t=F(\sigma^2(t))\,dt+ \delta^{1/2}n^{-1/4}dW_t,\quad t\in [0,1],
\]
with $F(x)=\int_1^x (y^{1/2}-2h_0^{-1})^{1/2}y^{-1}dy/\sqrt{8}$, which includes a small, but non-negligible second-order term since $h_0$ cannot tend to infinity too quickly.

On the other hand, it is quite easy to see that for $\alpha\le 1/4$ asymptotic equivalence fails. In the regression model ${\cal E}_0$ with $n$ observations we cannot distinguish between $X_n(t)=\int_0^t\sigma_n(t)\,dB_t$ with $\sigma_n^2(t)=1+n^{-1/4}\cos(\pi nt)$, $\norm{\sigma_n^2}_{C^{1/4}}=2+n^{-1/4}$, and standard Brownian motion ($\sigma^2=1$) since $X_n(i/n)-X_n((i-1)/n)\sim N(0,1/n)$ i.i.d. holds. On the other hand, we have $\int_0^1(\sqrt{2\sigma_n(t)}-\sqrt{2})^2\,dt\thicksim n^{-1/2}$, which shows that the signal to noise ratio in the Gaussian shift ${\cal G}_0$ is of order $1$ and a Neyman-Pearson test between $\sigma_n^2$ and $1$ can distinguish both signals with a positive probability. This different behaviour for testing in ${\cal E}_0$ and ${\cal G}_0$ implies that both models cannot be asymptotically equivalent for $\alpha=1/4$. Note that \cit{GloterJacodI} merely require $\alpha\ge 1/4$ for their LAN-result, but our counterexample is excluded by their parametric setting. In conclusion, the behaviour in the zone $\alpha\in (1/4,(1+\sqrt{5})/4]$ remains unexplored.

\section{Applications}\label{SecAppl}

Let us first consider the nonparametric problem of estimating the spot volatility $\sigma^2(t)$. From our asymptotic equivalence result in Theorem \ref{ThmE0G0} we can deduce, at least for bounded loss functions, the usual nonparametric minimax rates, but with the number $n$ of observations replaced by $\sqrt{n}$ provided $\sigma^2\in C^\alpha$ for $\alpha>(1+\sqrt{5})/4$ as the mapping $\sqrt{\sigma(t)}\mapsto \sigma^2(t)$ is a $C^\infty$-diffeomorphism for volatilities $\sigma^2$ bounded away from zero. Since the results so far obtained only deal with rate results, it is even simpler to use our less informative model ${\cal G}_1$ or more concretely the observations $(y_k)$ in \eqref{Eqyk} which are
independent in ${\cal E}_2$, centered and of variance $h^2\pi^{-2}\sigma^2(kh)+\eps^2$. With $h=\eps$ a local (kernel or wavelet) averaging over $\eps^{-2}\pi^2y_k^2-\pi^2$ therefore  yields rate-optimal estimators for classical pointwise or $L^p$-type loss functions.

For later use we choose $h=\eps$ in ${\cal E}_2$ and propose the simple estimator
\[ \hat\sigma^2_b(t):=\frac{\eps}{2b}\sum_{k:\abs{k\eps-t}\le b} (\eps^{-2}\pi^2y_k^2-\pi^2)
\]
for some bandwidth $b>0$. Since $\zeta_k^2$ is $\chi^2(1)$-distributed, it is standard \cite{Stone82} to show that with the choice $b\thicksim (\eps\log(\eps^{-1}))^{1/(2\alpha+1)}$ we have the sup-norm risk bound
\[ \E[\norm{\hat\sigma^2_b-\sigma^2}^2_\infty]\lesssim (\eps\log(\eps^{-1}))^{2\alpha/(2\alpha+1)},
\]
especially we shall need that $\hat\sigma^2_b$ is consistent in sup-norm loss.

In terms of the regression experiment ${\cal E}_0$ we work (in an asymptotically equivalent way) with the linear interpolation $\hat Y'$ of the observations $(Y_i)$, see the proof of Theorem \ref{ThmRegression}. By partial integration we can thus take for any $j,k$
\begin{equation}\label{Eqyjk0}
y_{jk}^0:=-\int_0^1\Phi_{jk}(t)\hat Y''(t)\,dt=\sum_{i=1}^n \Big(-\int_{(i-1)/n}^{i/n}\Phi_{jk}(t)\,dt\Big) (Y_i-Y_{i-1}),
\end{equation}
setting $Y_0:=0$. Note that we have the uniform approximation $y_{jk}^0= \frac{-1}{n}\sum_{i=1}^n\Phi_{jk}(i/n)(Y_i-Y_{i-1}) + O(h^{-1/2}n^{-1})$ due to $\norm{\phi_{jk}}_\infty\le (2h)^{-1/2}$.
We see the relationship with the pre-averaging approach. The idea of using disjoint averages is present in \cit{PodolskijVetter}, where in our terminology Haar functions are used as $\Phi_k$. They were aware of the fact that discretized sine functions would slightly increase the Fisher information (personal communication, see also their discussion after Corollary 2), but they have not used higher frequencies.

Since we use the concrete coupling by linear interpolation to define $y_{jk}^0$ in ${\cal E}_0$ and since convergence in total variation is stronger than weak convergence, all asymptotics for probabilities and weak convergence results for functionals $F((y_{jk})_{jk})$ in ${\cal E}_2$ remain true for $F((y_{jk}^0)_{jk})$ in ${\cal E}_0$, uniformly over the parameter class. The formal argument for the latter is that whenever $\norm{\PP_n-\QQ_n}_{TV}\to 0$ and $\PP_n^{X_n}\to \PP$ weakly for some random variables $X_n$ we have for all bounded and continuous $g$
\[ \E_{\QQ_n}[g(X_n)]=\E_{\PP_n}[g(X_n)]+O(\norm{g}_\infty\norm{\PP_n-\QQ_n}_{TV})
\xrightarrow{n\to\infty} \E_{\PP}[g(X)].
\]
Thus, for $\alpha>1/2$, $\underline\sigma^2>0$ and $b\thicksim (n^{-1/2}\log n)^{-1/(2\alpha+1)}$ the estimator
\begin{equation}\label{EqSigmaTilde} \tilde\sigma^2_n(t):=\frac{\delta}{2b\sqrt{n}}\sum_{k:\abs{kn^{-1/2}-t}\le b} (n\delta^{-2}\pi^2(y_k^0)^2-\pi^2)
\end{equation}
satisfies in the regression experiment ${\cal E}_0$
\begin{equation}\label{EqConsistentEstE0}
 \lim_{n\to \infty}\inf_{\sigma^2\in{\cal S}(\alpha,R,\underline{\sigma}^2)} \PP_{\sigma^2,n}(n^{\alpha/(4\alpha+2)}(\log n)^{-1} \norm{\tilde\sigma_n^2-\sigma^2}_\infty\le R)=1.
\end{equation}


The asymptotic equivalence can be applied to construct estimators for the integrated volatility $\int_0^1\sigma^2(t)dt$ or more generally $p$-th order integrals $\int_0^1\sigma^p(t)dt$ using the approach developed by \cit{IbrKhas2} for white noise models like ${\cal G}_0$. In our notation their Theorem 7.1 yields an estimator $\hat\theta_{p,n}$ of $\int_0^1\sigma^p(t)dt$  in ${\cal G}_0$ such that
\[ \E_{\sigma^2}\Big[\Big(\hat\theta_{p,n}-\int_0^1\sigma^p(t)\,dt-\delta^{1/2}n^{-1/4}
\sqrt{2}p\int_0^1 \sigma^{p-1/2}(t)\,dW_t\Big)^2\Big]=o(n^{-1/2})
\]
holds uniformly over $\sigma^2\in {\cal S}(\alpha,R,\underline{\sigma}^2)$ for any $\alpha,R,\underline{\sigma}^2>0$ since the functional $\sqrt{\sigma(\cdot)}\mapsto \int_0^1 \sigma^p(t)dt$ is smooth on $L^2$. A LAN-result shows that asymptotic normality with rate $n^{-1/4}$ and variance $\delta 2p^2\int_0^1\sigma^{2p-1}(t)\,dt$ is minimax optimal. Specializing to the case $p=2$ for integrated volatility, the asymptotic variance is $8\delta\int_0^1 \sigma^3(t)\,dt$. It should be stressed here that the existing estimation procedures for integrated volatility are globally sub-optimal for our idealized model in the sense that their asymptotic variances involve the integrated quarticity $\int_0^1 \sigma^4(t)\,dt$ which can at most yield optimal variance for constant values of $\sigma^2$, because otherwise $\int_0^1\sigma^4(t)\,dt>\big(\int_0^1\sigma^3(t)\,dt\big)^{4/3}$
follows from Jensen's inequality. The fundamental reason is that all these estimators are based on quadratic forms of the increments depending on
global tuning parameters, whereas optimizing weights locally permits to attain the above efficiency bound as we shall see.

Instead of following these more abstract approaches, we use our analysis to construct a simple estimator of the integrated volatility with optimal asymptotic variance. First we use the statistics $(y_{jk})$ in ${\cal E}_2$ and then transfer the results to ${\cal E}_0$ using $(y_{jk}^0)$ from \eqref{Eqyjk0}.

On each block $k$ we dispose in ${\cal E}_2$ of independent $N(0,h^2j^{-2}\pi^{-2}\sigma^2(kh)+\eps^2)$-observations $y_{jk}$ for $j\ge 1$. A maximum-likelihood estimator $\hat\sigma^2(kh)$ in this exponential family satisfies the estimating equation
\begin{align}
\hat\sigma^2(kh)&=\sum_{j\ge 1} w_{jk}(\hat\sigma^2)h^{-2}j^2\pi^2(y_{jk}^2-\eps^2),\\
\text{where } w_{jk}(\sigma^2)&:=\frac{(\sigma^2(kh)+ h_0^{-2}\pi^2j^2)^{-2}} {\sum_{l\ge 1}(\sigma^2(kh)+ h_0^{-2}\pi^2l^2)^{-2}}.
\end{align}
This can be solved numerically, yet it is a non-convex problem (personal communication by J. Schmidt-Hieber). Classical MLE-theory, however, asserts for fixed $h$, $k$ and consistent initial estimator $\tilde\sigma_n^2(kh)$ that only one Newton step suffices to ensure asymptotic efficiency. Because of $h\to 0$ this immediate argument does not apply here, but still gives rise to the estimator
\[\widehat{IV}_\eps:=\sum_{k=0}^{h^{-1}-1} h\sum_{j\ge 1} w_{jk}(\tilde\sigma_n^2)h^{-2}j^2\pi^2(y_{jk}^2-\eps^2)
\]
of the integrated volatility $IV:=\int_0^1\sigma^2(t)\,dt$. Assuming the $L^\infty$-consistency $\norm{\tilde\sigma_n^2-\sigma^2}_\infty\to 0$ in probability for the initial estimator, we assert in ${\cal E}_2$ the efficiency result
\[ \eps^{-1/2}(\widehat{IV}_\eps-IV)\xrightarrow{\cal L} N\Big(0,8\int_0^1\sigma^3(t)\,dt\Big).
\]
To prove this, it suffices by Slutsky's lemma to show
\begin{align}
\eps^{-1/2}\sum_{k=0}^{h^{-1}-1} h\sum_{j\ge 1} w_{jk}(\sigma^2)h^{-2}j^2\pi^2(y_{jk}^2-\eps^2)&\xrightarrow{\cal L} N\Big(0,8\int_0^1\sigma^3(t)\,dt\Big),\label{EqAssert1}\\
\sup_{jk}\abs{w_{jk}(\tilde\sigma_n^2)-w_{jk}(\sigma^2)}&\lesssim w_{jk}(\sigma^2)\norm{\tilde\sigma_n^2-\sigma^2}_\infty.\label{EqAssert2}
\end{align}
The second assertion \eqref{EqAssert2} follows from inserting the Lipschitz property that $W(x):=(x+ h_0^{-2}\pi^2j^2)^{-2}$ satisfies $\abs{W'(x)}\lesssim W(x)$ and thus $\abs{W(x)-W(y)}\lesssim W(x)\abs{x-y}$ uniformly over $x,y\ge\underline\sigma^2>0$.

For the first assertion \eqref{EqAssert1} note that in ${\cal E}_2$ the estimator $\widehat{IV}_\eps$ is unbiased and 
\[ \Var\Big(\sum_{j\ge 1} w_{jk}(\sigma^2)h^{-2}j^2\pi^2(y_{jk}^2-\eps^2)\Big)=
\frac{2}{\sum_{j\ge 1}(\sigma^2(kh)+h_0^{-2}\pi^2j^2)^{-2}}
\]
such that by formula \eqref{Eqseries} and Riemann sum approximation as $h_0\to\infty$ (with arbitrary speed)
\[ \eps^{-1}\Var(\widehat{IV}_\eps)=\sum_{k=0}^{h^{-1}-1}\frac{2hh_0}{\sum_{j\ge 1}(\sigma^2(kh)+h_0^{-2}\pi^2j^2)^{-2}}\rightarrow 8\int_0^1\sigma^3(t)\,dt.
\]
Due to the independence and Gaussianity of the $(y_{jk})$ we deduce also
\[ \E\Big[\Big(\sum_{j\ge 1} w_{jk}(\sigma^2)h^{-2}j^2\pi^2(y_{jk}^2-\E[y_{jk}^2])\Big)^4\Big]\lesssim \Var\Big(\sum_{j\ge 1} w_{jk}(\sigma^2)h^{-2}j^2\pi^2(y_{jk}^2-\eps^2)\Big)^2
\]
such that the central limit theorem under a Lyapounov condition with power $p=4$ (e.g. \cit{Shiryaev}) proves assertion \eqref{EqAssert1}, assuming $h\to 0$ and $h_0\to\infty$. A feasible estimator is obtained by neglecting frequencies larger than some $J=J(\eps)$:
\begin{align}
\widehat{IV}_{\eps,J}&:=\sum_{k=0}^{h^{-1}-1} h\sum_{j=1}^J w_{jk}^J(\tilde\sigma_n^2)h^{-2}j^2\pi^2(y_{jk}^2-\eps^2)\label{EqIVespJ}\\
\text{where } w_{jk}^J(\sigma^2)&:=\frac{(\sigma^2(kh)+ h_0^{-2}\pi^2j^2)^{-2}} {\sum_{l=1}^J(\sigma^2(kh)+ h_0^{-2}\pi^2l^2)^{-2}}.\label{EqwjkJ}
\end{align}

A simple calculation yields $\E[\abs{\widehat{IV}_{\eps,J}-\widehat{IV}_{\eps}}^2]\lesssim \eps(h_0/J)^3$ such that for $h_0/J\to 0$ convergence in probability implies again by Slutsky's lemma
\[ \eps^{-1/2}(\widehat{IV}_{\eps,J}-IV)\xrightarrow{\cal L} N\Big(0,8\int_0^1\sigma^3(t)\,dt\Big).
\]

By the above argument, weak convergence results transfer from ${\cal E}_2$ to ${\cal E}_0$ and we obtain the following result where we give a concrete choice of the initial estimator, the block size $h$ and the spectral cut-off $J$ (we just need some consistent estimator $\tilde\sigma^2_n$, $h^{2\alpha} n^{1/2}\to 0$ as well as $hn^{1/2}\to\infty$ and $J^{-1}=o(h^{-1}n^{-1/2})$).

\begin{theorem}\label{ThmIVhat}
Let $y_{jk}^0$ for $j\ge 1$, $k=0,h^{-1}-1$ be the statistics \eqref{Eqyjk0} from model ${\cal E}_0$. For $h\thicksim n^{-1/2}\log(n)$ and $J/\log(n)\to\infty$  consider the estimator of integrated volatility
\[\widehat{IV}_{n}:=\sum_{k=0}^{h^{-1}-1} h\sum_{j=1}^J w_{jk}^J(\tilde\sigma_n^2)h^{-2}j^2\pi^2((y_{jk}^0)^2-\delta n^{-1})
\]
with weights $w_{jk}^J$ from \eqref{EqwjkJ} and the initial estimator $\tilde\sigma_n^2$ from \eqref{EqSigmaTilde}.
Then $\widehat{IV}_n$ is asymptotically efficient in the sense that
\[ n^{1/4}(\widehat{IV}_n-IV)\xrightarrow{\cal L} N\Big(0,8\delta\int_0^1\sigma^3(t)\,dt\Big)\text{ as }n\to\infty,
\]
provided $\sigma^2$ is strictly positive and $\alpha$-H\"older continuous with $\alpha>1/2$.
\end{theorem}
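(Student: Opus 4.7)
The plan is to establish the central limit theorem first for the feasible estimator $\widehat{IV}_{\eps,J}$ in the white-noise experiment ${\cal E}_2$ with $\eps=\delta/\sqrt{n}$, using the idealised statistics $y_{jk}$, and then transfer the weak limit to ${\cal E}_0$ via the coupling supplied by the asymptotic equivalences ${\cal E}_0\sim{\cal E}_1\sim{\cal E}_2$ of Theorem~\ref{ThmRegression} and Proposition~\ref{PropPiecewiseConstant}. Because $\eps^{-1/2}=\delta^{-1/2}n^{1/4}$, a convergence at rate $\eps^{-1/2}$ with limit variance $8\int_0^1\sigma^3$ rescales precisely into the claimed $N(0,8\delta\int_0^1\sigma^3)$ at rate $n^{1/4}$.

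First I would check that the concrete choice $h\thicksim n^{-1/2}\log n$ and $J/\log n\to\infty$ meets every running hypothesis. Writing $h_0=h/\eps\thicksim\delta^{-1}\log n\to\infty$, the regularity requirement $h^\alpha=o(\eps^{1/2})$ of Proposition~\ref{PropPiecewiseConstant} reduces to $n^{-\alpha/2+1/4}(\log n)^\alpha=o(1)$, which is exactly the hypothesis $\alpha>1/2$; the truncation ratio $h_0/J$ tends to zero by the growth assumption on $J$; and the initial estimator $\tilde\sigma_n^2$ from \eqref{EqSigmaTilde} is $L^\infty$-consistent uniformly on ${\cal S}(\alpha,R,\underline\sigma^2)$ by \eqref{EqConsistentEstE0}, supplying the input needed to plug into the weights $w_{jk}^J$.

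The CLT in ${\cal E}_2$ then assembles five ingredients already laid out in the excerpt: (a) unbiasedness of the idealised $\widehat{IV}_\eps$ with true weights and no cutoff; (b) the variance limit $\eps^{-1}\Var(\widehat{IV}_\eps)\to 8\int_0^1\sigma^3$, obtained from the closed-form series \eqref{EqItheta} together with a Riemann-sum approximation as $h_0\to\infty$; (c) a Lyapunov CLT with $p=4$, using the explicit fourth-moment bound for sums of independent centred squared Gaussians; (d) the spectral-cutoff estimate $\E|\widehat{IV}_{\eps,J}-\widehat{IV}_\eps|^2\lesssim\eps(h_0/J)^3$; and (e) the plug-in control provided by the Lipschitz bound \eqref{EqAssert2} and consistency of $\tilde\sigma_n^2$. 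Combining these via Slutsky yields $\eps^{-1/2}(\widehat{IV}_{\eps,J}-IV)\xrightarrow{\cal L} N(0,8\int_0^1\sigma^3)$ in ${\cal E}_2$.

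Finally I would transfer the CLT to ${\cal E}_0$. Since the statistics $y_{jk}^0$ in \eqref{Eqyjk0} are precisely the images of the $y_{jk}$ under the linear-interpolation coupling used in the proof of Theorem~\ref{ThmRegression}, the joint laws of $(y_{jk}^0,\tilde\sigma_n^2)$ in ${\cal E}_0$ and of $(y_{jk},\hat\sigma_b^2)$ in ${\cal E}_2$ differ in total variation by $o(1)$ uniformly over $\sigma^2\in{\cal S}$; the general principle recalled just before \eqref{EqSigmaTilde}---that weak limits of measurable functionals are preserved under total-variation-vanishing perturbations---then yields the claim. The main obstacle I anticipate is the simultaneous bookkeeping of error terms: the TV gaps from Theorems~\ref{ThmRegression} and~\ref{PropPiecewiseConstant}, the $O(h^{-1/2}n^{-1})$ discretisation remainder in the definition of $y_{jk}^0$, the spectral truncation, and the plug-in error must all be $o(\eps^{1/2})$ for $h\thicksim n^{-1/2}\log n$, which is precisely what constrains the analysis to $\alpha>1/2$; moreover, the Riemann-sum approximation of the variance has to be controlled uniformly in the block index $k$ so that the blockwise $h_0\to\infty$ limit upgrades to the integral $\int_0^1\sigma^3$.
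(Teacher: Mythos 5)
Your proposal is correct and takes essentially the same route as the paper: it establishes the CLT for $\widehat{IV}_{\eps,J}$ in ${\cal E}_2$ from the same five ingredients (unbiasedness, the variance limit via \eqref{Eqseries} and Riemann sums, the Lyapunov condition with $p=4$, the cutoff bound $\E[|\widehat{IV}_{\eps,J}-\widehat{IV}_\eps|^2]\lesssim\eps(h_0/J)^3$, and the Lipschitz plug-in control \eqref{EqAssert2}), and then transfers the weak limit to ${\cal E}_0$ through the linear-interpolation coupling and the total-variation argument stated before \eqref{EqSigmaTilde}. Your verification that $h\thicksim n^{-1/2}\log n$ and $J/\log n\to\infty$ give $h^\alpha=o(\eps^{1/2})$ exactly for $\alpha>1/2$, $h_0\to\infty$ and $h_0/J\to 0$ matches the conditions the paper records in the sentence preceding the theorem.
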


This might serve as a benchmark for more general models, whereas we, in the spirit of \cit{Mykland}, focus on elucidating the underlying fundamental structures.
In particular, we should dispense with the Gaussianity of the microstructure noise $(\eps_i)$ as well as with the deterministic nature of the volatility $\sigma^2$.
The analysis in both cases, however, cannot simply rely on model ${\cal E}_2$, since ${\cal E}_0$ is non-Gaussian. Different tools are required.

\section{Appendix}

\subsection{Gaussian measures, Hellinger distance and Hilbert-Schmidt norm}
\label{SecPrelim}

We gather basic facts about cylindrical Gaussian measures, the Hellinger distance and their interplay.

Formally, we realize the white noise experiments, as $L^2$-indexed Gaussian variables, e.g. in experiment ${\cal E}_1$ we observe for any $f\in L^2([0,1])$
\[ Y_f:=\scapro{f}{dY}:=\int_0^1 f(t) \Big(\int_0^t\sigma(s)dB(s)\Big)\,dt+\eps\int_0^1 f(t)\,dW_t.
\]
Canonically, we thus define $\PP^{\sigma,\eps}$ on the set $\Omega=\R^{L^2([0,1])}$ with product Borel $\sigma$-algebra ${\cal F}={\mathfrak B}^{\otimes L^2([0,1])}$ (realizing a cylindrical centered Gaussian measure). Its covariance structure is given by
\[ \E[Y_fY_g]=\scapro{Cf}{g},\quad f,g\in L^2([0,1]),\]
with the covariance operator $C:L^2([0,1])\to L^2([0,1])$ given by
\[
Cf(t)=\int_0^1 \Big(\int_0^{t\wedge u}\sigma^2(s)\,ds\Big)f(u)\,du+\eps^2f(t),\quad f\in L^2([0,1]).
\]
Note that $C$ is not trace class and thus does not define a Gaussian measure on $L^2([0,1])$ itself.

In the construction, it suffices to prescribe $(Y_{e_m})_{m\ge 1}$ for an orthonormal basis $(e_m)_{m\ge 1}$ and to set
\[ Y_f:=\sum_{m=1}^\infty \scapro{f}{e_m}Y_{e_m}.\]
This way, we can define $\PP^{\sigma,\eps}$ equivalently on the sequence space $\Omega=\R^{\N}$ with product $\sigma$-algebra ${\cal F}={\mathfrak B}^{\otimes \N}$. This is useful when extending results from finite dimensions.

The Hellinger distance between two probability measures $\PP$ and $\QQ$ on $(\Omega,{\cal F})$ is defined as
\[ H(\PP,\QQ)=\Big(\int_\Omega \big(\sqrt{p(\omega)}-\sqrt{q(\omega)}\big)^2\mu(d\omega)\Big)^{1/2},
\]
where $\mu$ denotes a dominating measure, e.g. $\mu=\PP+\QQ$, and $p$ and $q$ denote the respective densities. The total variation distance is smaller than the Hellinger distance:
\begin{equation}\label{EqHellTV}
\norm{\PP-\QQ}_{TV}\le H(\PP,\QQ).
\end{equation}
The identity $H^2(\PP,\QQ)=2-2\int\sqrt{p}\sqrt{q}d\mu$ implies the bound for finite or countably infinite product measures
\begin{equation}\label{EqHellProd} H^2\Big(\bigotimes_{n}\PP_n,\bigotimes_n\QQ_n\Big)\le \sum_n H^2(\PP_n,\QQ_n).
\end{equation}
Moreover, the Hellinger distance is invariant under bi-measurable bijections $T:\Omega\to\Omega'$ since with the densities $p\circ T^{-1}$, $q\circ T^{-1}$ of the image measures $\PP^T$ and $\QQ^T$ with respect to $\mu^T$ we have
\begin{equation}\label{EqHellTrafo}
H^2(\PP^T,\QQ^T)=\int_{\Omega'} (\sqrt{p\circ T^{-1}}-\sqrt{q\circ T^{-1}})^2d\mu^T=\int_\Omega (\sqrt{p}-\sqrt{q})^2 d\mu=H^2(\PP,\QQ).
\end{equation}
For the one-dimensional Gaussian laws $N(0,1)$ and $N(0,\sigma^2)$ we derive
\[ H^2(N(0,1),N(0,\sigma^2))=2-\sqrt{8\sigma/(\sigma^2+1)}\le 2(\sigma^2-1)^2.\]
For the multi-dimensional Gaussian laws $N(0,\Sigma_1)$ and $N(0,\Sigma_2)$ with invertible covariance matrices $\Sigma_1,\Sigma_2\in\R^{d\times d}$ we obtain by linear transformation and independence, denoting by $\lambda_1,\ldots,\lambda_d$ the eigenvalues of $\Sigma_1^{-1/2}\Sigma_2\Sigma_1^{-1/2}$:
\[ H^2(N(0,\Sigma_1),N(0,\Sigma_2))=H^2(N(0,\Id),N(0,\Sigma_1^{-1/2}\Sigma_2\Sigma_1^{-1/2}))
\le \sum_{k=1}^d 2(\lambda_k-1)^2.
\]
The last sum is nothing, but the squared Hilbert-Schmidt (or Frobenius norm) of $\Sigma_1^{-1/2}\Sigma_2\Sigma_1^{-1/2}-\Id$ such that
\begin{equation}\label{EqHellHS}
H^2(N(0,\Sigma_1),N(0,\Sigma_2))\le 2 \norm{\Sigma_1^{-1/2}(\Sigma_2-\Sigma_1)\Sigma_1^{-1/2}}_{HS}^2.
\end{equation}
Observing that \eqref{EqHellProd} and \eqref{EqHellTrafo} also apply to Gaussian measures on the sequence space $\R^{\N}$, the bound \eqref{EqHellHS} is also valid for (cylindrical) Gaussian measures $N(0,\Sigma_i)$ with self-adjoint positive definite covariance operators $\Sigma_i:L^2([0,1])\to L^2([0,1])$.

The Hilbert-Schmidt norm of a linear operator $A:H\to H$ on any separable real Hilbert space $H$ can be expressed by its action on an orthonormal basis $(e_m)$ via
\[ \norm{A}_{HS}^2=\sum_{m,n} \scapro{Ae_m}{e_n}^2,\]
which for a matrix is just the usual Frobenius norm. For self-adjoint operators $A,B$
with $\abs{\scapro{Av}{v}}\le \abs{\scapro{Bv}{v}}$ for all $v\in H$
we use the eigenbasis $(e_m)$ of $A$ and obtain
\begin{equation}\label{EqHSorder}
\norm{A}_{HS}^2=\sum_{m} \scapro{Ae_m}{e_m}^2\le \sum_{m,n} \scapro{Be_m}{e_n}^2
=\norm{B}_{HS}^2.
\end{equation}
Furthermore, it is straight-forward to see for any bounded operator $T$
\begin{equation}\label{EqHSTrafo}
\norm{TA}_{HS}\le \norm{T}\norm{A}_{HS},\quad \norm{AT}_{HS}\le \norm{T}\norm{A}_{HS}
\end{equation}
with the usual operator norm $\norm{T}$ of $T$. Finally, for integral operators $Kf(x)=\int_0^1 k(x,y)f(y)\,dy$ on $L^2([0,1])$ it is well known that
\begin{equation}\label{EqHSL2}
\norm{K}_{HS}=\norm{k}_{L^2([0,1]^2)}.
\end{equation}

For two Gaussian laws with different mean vectors $\mu_1,\mu_2$ and with the same invertible covariance matrix $\Sigma$ we can similarly use the transformation $\Sigma^{-1/2}$ and the scalar case $H^2(N(m_1,1),N(m_2,1))=2(1-e^{-(m_1-m_2)^2/8})\le (m_1-m_2)^2/4$ to conclude by independence
\begin{equation}\label{EqHellmean}
H^2(N(\mu_1,\Sigma),N(\mu_2,\Sigma))\le \tfrac14 \norm{\Sigma^{-1/2}(\mu_1-\mu_2)}^2.
\end{equation}
Combining \eqref{EqHellHS} and \eqref{EqHellmean} we obtain by the triangle inequality the bound
\begin{equation}\label{EqHellGaussGeneral}
H^2(N(\mu_1,\Sigma_1),N(\mu_2,\Sigma_2))\lesssim  \norm{\Sigma_1^{-1/2}(\mu_1-\mu_2)}^2
+\norm{\Sigma_1^{-1/2}(\Sigma_2-\Sigma_1)\Sigma_1^{-1/2}}_{HS}^2.
\end{equation}

\subsection{Proof of Theorem \ref{ThmRegression}}\label{SecProofThmRegression}

We first show that ${\cal E}_1$ is asymptotically at least as informative as ${\cal E}_0$ for $\eps=\delta/\sqrt{n}$ and $\alpha>0$. From ${\cal E}_1$ with $\eps=\delta/\sqrt{n}$ we can generate the observations (statistics)
\begin{align*}
\tilde Y_i&:=n\int_{(2i-1)/2n}^{(2i+1)/2n} dY_t=n\int_{(2i-1)/2n}^{(2i+1)/2n} X_tdt+\tilde\eps_i,\quad i=1,\ldots,n-1,\\
\tilde Y_n&:=2n\int_{(2n-1)/2n}^1 dY_t=2n\int_{(2n-1)/2n}^1 X_tdt+\tilde\eps_n,
\end{align*}
with $\tilde\eps_i=n\eps (W_{(2i+1)/2n}-W_{(2i-1)/2n})\sim N(0,\delta^2)$ and similarly $\tilde\eps_n\sim N(0,\delta^2)$, all independent. In contrast to standard equivalence proofs, it turns out to be essential here to take $\tilde Y_i$ as a mean symmetric around the point $i/n$. Since $(Y_i)$ and $(\tilde Y_i)$ are defined on the same sample space, using inequality \eqref{EqHellTV} it suffices to prove that the Hellinger distance between the law of $(Y_i)$ and the law of $(\tilde Y_i)$ tends to zero as $n$ tends to infinity.

For the integrated volatility function we introduce the notation
\[ a(t):=\int_0^t\sigma^2(s)\,ds,\quad 0\le t\le 1.\]
For notational convenience we also set $a(1+s):=a(1-s)$ for $s>0$.

The covariance matrix $\Sigma^Y$ of the centered Gaussian vector $(Y_i)$ is given by
\[ \Sigma^Y_{kl}:=\E[Y_kY_l]=a(k/n)+\delta^2{\bf 1}(k=l),\quad 1\le k\le l\le n.
\]
Similarly, the covariance matrix $\Sigma^{\tilde Y}$ of the centered Gaussian vector $(\tilde Y_i)$ is given by
\[ \Sigma^{\tilde Y}_{kl}:=\E[\tilde Y_k \tilde Y_l]=n\int_{(2k-1)/2n}^{(2k+1)/2n}a(t)\,dt+\delta^2{\bf 1}(k=l),\quad 1\le k\le l\le n,
\]
where for $k=l=n$ we used the convention for $a(1+s)$ above. We bound the  Hellinger distance using consecutively \eqref{EqHellHS}, $\Sigma^Y\ge \delta^2\Id$ in \eqref{EqHSorder} and \eqref{EqHellProd}, a Taylor expansion for $a$ and treating the case $k=l=n$ by a Lipschitz bound separately:
\begin{align*}
&H^2({\cal L}(Y_i, i=1,\ldots,n),{\cal L}(\tilde Y_i, i=1,\ldots,n))\\
&\le 2 \norm{(\Sigma^Y)^{-1/2}(\Sigma^Y-\Sigma^{\tilde Y})(\Sigma^Y)^{-1/2}}_{HS}^2\\
&\le 2\delta^{-4}\norm{\Sigma^{\tilde Y}-\Sigma^Y}_{HS}^2\\
&\le 4\delta^{-4}\sum_{1\le k\le l\le n} \Big(n\int_{(2k-1)/2n}^{(2k+1)/2n}(a(t)-a(k/n))\,dt\Big)^2\\
&\le 4\delta^{-4}\Big(O(R^2n^{-2})+n\sum_{k=1}^n \Big(n\int_{(2k-1)/2n}^{(2k+1)/2n}(a'(k/n)(t-k/n)+O(Rn^{-1-\alpha}))\,dt\Big)^2\Big)\\
&=4\delta^{-4}\Big(O(R^2n^{-2})+O(R^2n^{2-2-2\alpha})\Big)\\
&=O(\delta^{-4}R^2n^{-2\alpha}).
\end{align*}
Consequently, by \eqref{EqHellTV} the total-variation and thus also the Le Cam distance between the experiments of observing $(Y_i)$ and of observing $(\tilde Y_i)$ tends to zero for $n\to\infty$, which proves that the white noise experiment ${\cal E}_1$ is asymptotically at least as informative as the regression experiment ${\cal E}_0$.

To show the converse, we build from the regression experiment ${\cal E}_0$ a continuous time observation by linear interpolation. To this end we introduce the linear $B$-splines  (or hat functions) $b_i(t)=b(t-i/n)$ with $b(t)=\min(1+nt,1-tn){\bf 1}_{[-1/n,1/n]}(t)$ and set
\[ \hat Y'_t:=\sum_{i=1}^n Y_ib_i(t)=\sum_{i=1}^n X_{i/n}b_i(t)+\sum_{i=1}^n\eps_i b_i(t),\quad t\in [0,1].\]
Note that $(\hat Y'_t)$ is a centered Gaussian process with covariance function
\[ \hat c(t,s):=\E[\hat Y'_t\hat Y'_s]=\sum_{i,j=1}^n a((i\wedge j)/n)b_i(t)b_j(s)+\delta^2\sum_{i=1}^n b_i(t)b_i(s),\quad 0\le t,s\le 1.
\]
For any $f\in L^2([0,1])$ we thus obtain
\begin{align*}
\E[\scapro{f}{\hat Y'}^2]&=\sum_{i,j=1}^n a((i\wedge j)/n)\scapro{f}{b_i}\scapro{f}{b_j}+\delta^2\sum_{i=1}^n\scapro{f}{b_i}^2\\
&\le \sum_{i,j=1}^n a((i\wedge j)/n)\scapro{f}{b_i}\scapro{f}{b_j}+\delta^2n^{-1}\norm{f}^2,
\end{align*}
because $\int nb_i=1$ yields by Jensen's inequality $\scapro{f}{nb_i}^2\le \scapro{f^2}{nb_i}$ and we have $\sum_i b_i\le 1$. This means that the covariance operator $\hat C$ induced by the kernel $\hat c$ is smaller than
\[ \overline C f(t):=\sum_{i,j=1}^n a((i\wedge j)/n)\scapro{f}{b_j}b_i(t)+\delta^2n^{-1}f(t),\quad f\in L^2([0,1])
\]
in the sense that $\hat C-\overline C$ is positive (semi-)definite. Now observe that $\overline C$ is the covariance operator of the white noise observations
\begin{equation}\label{EqYbar}
d\bar Y_t= \sum_{i=1}^n X_{i/n}b_i(t)+\frac{\delta}{\sqrt{n}}dW_t,\quad t\in [0,1].
\end{equation}
Hence, we can generate these observations from $(\hat Y'_t)$ by randomisation, i.e. by adding uninformative $N(0,\overline C-\hat C)$-noise  to $\hat Y'$. Now it is easy to see that observing $\bar Y$ in \eqref{EqYbar} and $Y$ from ${\cal E}_1$ is asymptotically equivalent, since in terms of the respective covariance operators, using again \eqref{EqHellHS}, \eqref{EqHSorder} and \eqref{EqHellProd}, the squared Hellinger distance satisfies
\begin{align*}
H^2({\cal L}(\bar Y),{\cal L}(Y))&\le 2\norm{(C^Y)^{-1/2}(\overline C-C^Y)(C^Y)^{-1/2}}_{HS}^2\\
&\le 2\delta^{-4}n^2\int_0^1\int_0^1 \Big(a(t\wedge s)-\sum_{i,j=1}^n a((i\wedge j)/n)b_i(t)b_j(s)\Big)^2dtds\\
&= 2\delta^{-4}n^2\int_0^1\int_0^1 \Big(\sum_{i,j=0}^n (a(t\wedge s)-a((i\wedge j)/n))b_i(t)b_j(s)\Big)^2dtds,
\end{align*}
where for the last line we have used $\sum_{i=0}^nb_i(t)=1$ and $a(0)=0$. Since $b_i(t)\not=0$ can only hold when $i-\floor{nt}\in \{0,1\}$, the $\alpha$-H\"older regularity of $\sigma^2$ implies for $t\le s-1/n$:
\begin{align*}
&\Big(\sum_{i,j=0}^n (a(t\wedge s)-a((i\wedge j)/n))b_i(t)b_j(s)\Big)^2 \\
&=
\Big(\sum_{k,l=0}^1 (a'(\floor{nt}/n)(t-(k+\floor{nt})/n)+O(Rn^{-1-\alpha}))
b_{k+\floor{nt}}(t)b_{l+\floor{ns}}(s)\Big)^2\\
&=O(R^2n^{-2-2\alpha})+\Big(a'(\floor{nt}/n)\sum_{k=0}^1 (t-(k+\floor{nt})/n)
b_{k+\floor{nt}}(t)\Big)^2\\
&=O(R^2n^{-2-2\alpha}).
\end{align*}
A symmetric argument gives the same bound for $s\le t-1/n$. For $\abs{t-s}<1/n$ we use only the Lipschitz continuity of $a$ to obtain the bound $O(R^2n^{-2})$. Altogether we have found
\begin{align*}
H^2({\cal L}(\bar Y),{\cal L}(Y))& \le 2\delta^{-4}n^2\Big(O(R^2n^{-2-2\alpha})+n^{-1}O(R^2n^{-2})\Big)
=O(\delta^{-4}R^2n^{-2\alpha}),
\end{align*}
which together with the transformation in the other direction shows that the Le Cam distance between ${\cal E}_0$ and ${\cal E}_1$ is of order $O(\delta^{-2}Rn^{-\alpha})$.

\subsection{Proof of Proposition \ref{PropPiecewiseConstant}} \label{SecPropPiecewiseConstant}

The main tool is Proposition \ref{PropApprox} below. Together with the H\"older bound
\[\abs{\sigma^2(\floor{s}_h)-\sigma^2(s)}\le R h^\alpha,\quad s\in [0,1],
\]
it implies that for fixed $\sigma$ the observation laws in ${\cal E}_1$ and ${\cal E}_2$ have a Hellinger distance of order $Rh^\alpha\underline\sigma^{-3/2}\eps^{-1/2}$. By inequality \eqref{EqHellTV} this translates to the total variation and thus to the Le Cam distance.

\begin{proposition}\label{PropApprox}
For $\eps>0$ and continuous $\sigma:[0,1]\to(0,\infty)$ consider the law $\PP^{\sigma,\eps}$ generated by
\[ dY_t=\Big(\int_0^t\sigma(s)dB(s)\Big)\,dt+\eps\,dW_t,\quad t\in [0,1],\]
with independent Brownian motions $B$ and $W$. Then the Hellinger distance between two laws $\PP^{\sigma_1,\eps}$ and $\PP^{\sigma_2,\eps}$ satisfies
\[ H(\PP^{\sigma_1,\eps},\PP^{\sigma_2,\eps})\lesssim \norm{\sigma_1^2-\sigma_2^2}_\infty \big(\max_{t\in[0,1]} \sigma_1^{-3}(t)\big)\eps^{-1/2}.\]
\end{proposition}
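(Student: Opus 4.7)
The plan is to realise both observation laws as centered cylindrical Gaussian measures and then apply the Gaussian Hellinger estimate \eqref{EqHellHS}. First I would compute the covariance operator: $\PP^{\sigma_i,\eps}$ has covariance
\[
C^{\sigma_i}=JM_{\sigma_i^2}J^*+\eps^2\Id\quad\text{on }L^2([0,1]),
\]
where $Jf(t):=\int_0^t f(s)\,ds$ is the Volterra operator and $M_g$ denotes multiplication by $g$; this reproduces the kernel $\int_0^{t\wedge u}\sigma_i^2(s)\,ds$ of the autocovariance of $\int_0^\cdot X_s\,ds$. Writing $q:=\sigma_2^2-\sigma_1^2$ gives the clean identity $C^{\sigma_2}-C^{\sigma_1}=JM_qJ^*$, and \eqref{EqHellHS} then reduces the task to bounding $\bnorm{(C^{\sigma_1})^{-1/2}JM_qJ^*(C^{\sigma_1})^{-1/2}}_{HS}$.

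Next I would exploit that $\abs{\scapro{M_q\psi}{\psi}}\le \norm{q}_\infty\scapro{\psi}{\psi}$ pointwise, which yields the quadratic-form bound $\pm JM_qJ^*\le \norm{q}_\infty JJ^*=\norm{q}_\infty K^1$ with $K^1:=JJ^*$ the standard Brownian covariance. Conjugating by $(C^{\sigma_1})^{-1/2}$ preserves this inequality and, invoking \eqref{EqHSorder},
\[
\bnorm{(C^{\sigma_1})^{-1/2}(C^{\sigma_2}-C^{\sigma_1})(C^{\sigma_1})^{-1/2}}_{HS}
\le \norm{q}_\infty\norm{B}_{HS},\qquad B:=(C^{\sigma_1})^{-1/2}K^1(C^{\sigma_1})^{-1/2}.
\]
The proposition will then follow from the Brownian-covariance estimate $\norm{B}_{HS}^2\lesssim \sigma_{1,\min}^{-3}\eps^{-1}$ with $\sigma_{1,\min}:=\min_t\sigma_1(t)$.

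To prove this last estimate, I would use that $B$ is positive self-adjoint so $\norm{B}_{HS}^2=\trace(B^2)\le \norm{B}\trace(B)$ (operator norm times trace), and control both factors by replacing $\sigma_1$ with the constant $\sigma_{1,\min}$. For the operator norm, $K^1\le \sigma_{1,\min}^{-2}JM_{\sigma_1^2}J^*\le \sigma_{1,\min}^{-2}C^{\sigma_1}$ gives $\norm{B}\le \sigma_{1,\min}^{-2}$. For the trace, cyclic invariance gives $\trace(B)=\trace((C^{\sigma_1})^{-1}K^1)$; combining $C^{\sigma_1}\ge \sigma_{1,\min}^2 K^1+\eps^2\Id$ with operator-monotonicity of inversion and the explicit sine eigenbasis of $K^1$ (eigenvalues $\mu_n=((n-1/2)\pi)^{-2}$),
\[
\trace(B)\le \sum_{n\ge 1}\frac{\mu_n}{\sigma_{1,\min}^2\mu_n+\eps^2}=\sum_{n\ge 1}\frac{1}{\sigma_{1,\min}^2+\eps^2\pi^2(n-1/2)^2}\lesssim \frac{1}{\sigma_{1,\min}\eps}
\]
after a Riemann-sum comparison with $\int_0^\infty dy/(\sigma_{1,\min}^2+y^2)=\pi/(2\sigma_{1,\min})$. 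This combines to $\norm{B}_{HS}\lesssim \sigma_{1,\min}^{-3/2}\eps^{-1/2}$, which delivers the required Hellinger bound (with the sharper exponent $\sigma_{1,\min}^{-3/2}$ that is exactly what is used subsequently in Proposition \ref{PropPiecewiseConstant}).

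The hard part will be this final Brownian-covariance estimate: because $K^1$ is not trace-class on $L^2$, a naive factorisation of the HS norm through the operator norm would produce infinity, so it is essential to split as $\trace(B^2)\le \norm{B}\trace(B)$ and exploit that $\trace(B)$ is of the milder order $\eps^{-1}$ rather than $\eps^{-2}$. The operator-monotone bound $(C^{\sigma_1})^{-1}\le (\sigma_{1,\min}^2 K^1+\eps^2\Id)^{-1}$, valid because all operators involved are positive self-adjoint, then reduces the trace computation to the explicit spectrum of a constant-volatility Brownian-plus-noise covariance, where the $\eps^{-1}$ scaling becomes visible through the Riemann-sum estimate above.
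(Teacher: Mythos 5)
Your proof is correct and follows essentially the same route as the paper: realise both laws as centered Gaussians, dominate $C^{\sigma_2}-C^{\sigma_1}$ in quadratic form by $\norm{\sigma_1^2-\sigma_2^2}_\infty$ times the Brownian covariance, compare $C^{\sigma_1}$ from below with $\sigma_{1,\min}^2 C_{BM}+\eps^2\Id$, and finish via the explicit sine spectrum of $C_{BM}$; the only (harmless) difference is that you organise the final Hilbert--Schmidt computation as $\trace(B^2)\le \norm{B}\trace(B)$ where the paper sums the squared eigenvalues of the operator $F(C_{BM})$ directly by functional calculus. Your remark that the computation actually delivers the exponent $\sigma_{1,\min}^{-3/2}$ rather than the stated $\sigma_{1,\min}^{-3}$ is consistent with the paper itself, whose eigenvalue sum gives the same $-3/2$ and which uses precisely $\underline{\sigma}^{-3/2}$ in Proposition \ref{PropPiecewiseConstant}.
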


\begin{proof}
The covariance operator $C_\sigma$ of $\PP^{\sigma,\eps}$ is for $f,g\in L^2([0,1])$ given by
\[ \scapro{C_\sigma f}{g} =\E[\scapro{f}{dY}\scapro{g}{dY}]=\E[\scapro{f}{X}\scapro{g}{X}]+\eps^2\scapro{f}{g}
=\int FG\sigma^2+\eps^2\int fg.
\]
For covariance operators corresponding to $\sigma_1$, $\sigma_2$ we have with $F(t)=-\int_t^1 f(s)ds$ by twofold partial integration
\begin{align*}
\abs{\scapro{(C_{\sigma_1}-C_{\sigma_2})f}{f}}&=\babs{\int_0^1\int_0^1 \int_0^{t\wedge s} (\sigma_1^2-\sigma_2^2)(u)\,du f(t)f(s)\,ds\,dt}\\
&=\babs{\int_0^1 F(u)^2(\sigma_1^2-\sigma_2^2)(u)\,du}\\
&\le \norm{\sigma_1^2-\sigma_2^2}_\infty\int_0^1 F(u)^2\,du
= \norm{\sigma_1^2-\sigma_2^2}_\infty \scapro{C_{BM}f}{f}
\end{align*}
with $C_{BM}g(t):=\int_0^1 (t\wedge s)g(s)\,ds$, the covariance operator of standard Brownian motion.
Using further the ordering $C_{\sigma_1}\ge \min_t\sigma_1^2(t)C_{BM}+\eps^2\Id$ and \eqref{EqHSorder}, \eqref{EqHellProd} we obtain
\begin{align*}
&\norm{C_{\sigma_1}^{-1/2}(C_{\sigma_2}-C_{\sigma_1})C_{\sigma_1}^{-1/2}}_{HS}\\
&\le \norm{\sigma_1^2-\sigma_2^2}_\infty \norm{C_{\sigma_1}^{-1/2}C_{BM}C_{\sigma_1}^{-1/2}}_{HS}\\
&\le \norm{\sigma_1^2-\sigma_2^2}_\infty
\norm{(\min_t\sigma_1^2(t) C_{BM}+\eps^2\Id)^{-1/2}C_{BM}(\min_t\sigma_1^2(t) C_{BM}+\eps^2\Id)^{-1/2}}_{HS}\\
&=\norm{\sigma_1^2-\sigma_2^2}_\infty
\norm{F(C_{BM})}_{HS},
\end{align*}
employing functional calculus with $F(x)=(\min_t\sigma_1^2(t)x+\eps^2)^{-1}x$.
The spectral properties of $C_{BM}$ imply that $F(C_{BM})$ has eigenfunctions $e_k(t)=\sqrt{2}\sin(\pi (k-1/2)t)$, $k\ge 1$, with eigenvalues $\lambda_k=\frac{4}{4\min_t\sigma_1^2(t)+(2k-1)^2\pi^2\eps^2}$, whence its Hilbert-Schmidt norm is of order $\max_t\sigma_1^{-3}(t)\eps^{-1/2}$. This yields the result.
\end{proof}

\subsection{Proof of Proposition \ref{PropE1G3loc}}\label{SecProofPropE1G3loc}

We only consider the case of odd indices $k$, both cases are treated analogously.
The result of Theorem 6.1 in conjunction with Theorem 5.2 of \cit{GramaNussbaum02} establishes that ${\cal E}_{3,m}^{odd}$ and the Gaussian regression experiment ${\cal G}_{3,m}$ of observing
\begin{equation}\label{EqG3m}
Y_k=v_\eps s^2(kh)+I(\sigma_0^2(kh))^{-1/2}\gamma_k,\quad k\in A_m\text{ odd},\quad\gamma_k\sim N(0,1)\text{ i.i.d.}
\end{equation}
are equivalent to experiments $\tilde{\cal E}_{3,m}=({\cal Y},{\cal G},(\tilde\PP^m_{s^2})_{s^2\in C_\alpha(R)})$ and $\tilde{\cal G}_{3,m}=({\cal Y},{\cal G},(\tilde\QQ^m_{s^2})_{s^2\in C_\alpha(R)})$, respectively, on the same space $({\cal Y},{\cal G})$ such that
\begin{equation}\label{EqHellLoc}
\sup_{s^2\in C_\alpha(R)}H^2(\tilde\PP_{s^2}^m,\tilde\QQ_{s^2}^m)\lesssim \ell^{-2\rho}
\end{equation}
holds for all $\rho<1$.

To be precise, it must be checked that the regularity conditions $R1-R3$ of \cit{GramaNussbaum02} are satisfied for all values $\delta$. One complication is that in our parametric model the probabilities $\PP_\theta$ and the Fisher information $I(\theta)$ depend on $h_0$ which tends to infinity. Yet, inspecting the proofs it becomes clear that the results remain valid if (a) the conditions $R1-R3$ hold for varying models, but with uniform constants and (b) the Fisher information is renormalized by the localisation such that the parametric rate $\ell^{-1/2}$ (in our block length notation) is attained. From the fact that $\PP_\theta$ is the product of one-dimensional exponential family models we easily check condition $R1$ for $\delta=1$ and condition $R2$ for any $\delta>0$. Both conditions hold uniformly over $h_0$ once the score $\dot l$ has been renormalized through multiplication by $h_0^{-1/2}$. In \eqref{EqItheta} we have already calculated the Fisher information and we infer directly condition $R3$ that $h_0^{-1}I(\theta)$ is uniformly bounded away from zero and infinity. We thus infer \eqref{EqHellLoc}.

In view of the independence among the experiments $({\cal E}_{3,m}^{odd})_m$ and equally among the experiments $({\cal G}_{3,m})_m$ we infer from \eqref{EqHellLoc} and \eqref{EqHellProd}
\[ \sup_{s^2\in C_\alpha(R)}H^2(\otimes_{m=1}^{(\ell h)^{-1}}\tilde\PP_{s^2}^m, \otimes_{m=1}^{(\ell h)^{-1}}\tilde\QQ_{s^2}^m)\lesssim (\ell h)^{-1} \ell^{-2\rho} \lesssim  \eps^{-1} v_\eps^2 h_0^{2\rho}v_\eps^{4\rho}.
\]
Since we assume $h_0=o(\eps^{(1-2\alpha)/2\alpha})$, the right-hand side tends to zero provided
\[ -1+ 2\frac{\alpha}{2\alpha+1}+\frac{\rho(1-2\alpha)}{\alpha}+\frac{4\rho\alpha}{2\alpha+1}
=\frac{\rho-\alpha}{\alpha(2\alpha+1)}>0
\]
holds. Since $\rho<1$ is arbitrary, this is always satisfied for $\alpha<1$. In the case $\alpha=1$ we use $h_0\lesssim \eps^{-p}$ for some $p<1/2$. We have derived asymptotic equivalence between
the product experiments $\otimes_m\tilde{\cal E}_{3,m}^{loc}$ and $\otimes_m\tilde{\cal G}_{3,m}$. A fortiori, applying the \cit{BrownLow} result, this leads to asymptotic equivalence between
observing $(y_{jk})$ in experiments ${\cal E}_{2,loc}$ and the corresponding Gaussian shift models of observing
\begin{equation}\label{EqG2.9loc}
dY_t=I(\sigma_0^2(t))^{1/2}v_\eps s^2(t)\,dt+ (2h)^{1/2}dW_t,\quad t\in [0,1].
\end{equation}

From the explicit form \eqref{EqItheta} of the Fisher information we infer  for $h_0\to\infty$
\[\babs{\frac{2\theta^{3/2}}{h_0}I(\theta)-\frac14+\frac{1}{2\theta^{1/2} h_0}}\lesssim e^{-\underline\sigma h_0}.\]
Consequently, by the polynomial growth of $h_0$ in $\eps^{-1}$, the Kullback-Leibler divergence between the observation laws from \eqref{EqG2.9loc} and the model ${\cal G}_{3,loc}$ converges to zero. This gives the result.

\subsection{Proof of Proposition \ref{PropE1locyjk}}\label{ProofPropE1locyjk}

Since the observations $y_{jk}$ for $j\ge 1$ are the same in $\cal Y$ and $\tilde{\cal Y}$, we can work conditionally on those. Moreover, it suffices to consider only the event $\Omega_\eps:=\{\norm{\hat\sigma_\eps^2-\sigma^2}_\infty\le Rv_\eps\}$ because the squared Hellinger distance satisfies (with obvious notation)
\begin{align*}
&H^2({\cal L}({\cal Y}),{\cal L}(\tilde{\cal Y}))=\E[H^2({\cal L}((y_{0k})_k\,|\,(y_{jk})_{j\ge 1,k}),{\cal L}((\tilde y_{0k})_k\,|\,(y_{jk})_{j\ge 1,k}))]\\
&\le \E[H^2({\cal L}((y_{0k})_k\,|\,(y_{jk})_{j\ge 1,k}),{\cal L}((\tilde y_{0k})_k\,|\,(y_{jk})_{j\ge 1,k})){\bf 1}_{\Omega_\eps}]+2\PP(\Omega_\eps^\complement)
\end{align*}
with $\PP(\Omega_\eps^\complement)\to 0$. Conditional on $(y_{jk})_{j\ge 1,k}$, both laws are Gaussian,  $(y_{0,k})_k$ has
mean $\mu$ with
\[ \mu_0=2\sum_{j\ge 1}\frac{\Var(\beta_{jk})}{\Var(y_{jk})}y_{jk},\quad \mu_k=\sum_{j\ge 1}\frac{\Var(\beta_{jk})}{\Var(y_{jk})}\big((-1)^{j+1}y_{j,k-1}+y_{jk}\big),\quad k\ge 1,
\]
and covariance matrix $\Sigma$ with
\[ \Sigma_{k,k'}=\begin{cases}
2c_k\sum_{j\ge 1}\frac{\eps^2\Var(\beta_{jk})}{\Var(y_{jk})}+\eps^2,& \text{if } k'=k,\\
c_{k\wedge k'}\sum_{j\ge 1}\frac{\eps^2\Var(\beta_{jk})}{\Var(y_{jk})}
-\frac{\eps^2}{2},& \text{if } \abs{k'-k}=1,\\
0,&\text{otherwise,}
\end{cases}
\]
where $c_k:=1\vee(2-k)\in\{1,2\}$. Conditional mean $\tilde\mu$ and covariance matrix $\tilde\Sigma$ of $(\tilde y_{0k})_k$ have the same representation, but replacing $\Var$ each time by $\Var_\eps$.

From the tri-diagonal structure of $\Sigma$ and from
\[\sum_{j\ge 1}\frac{\Var(\beta_{jk})}{\Var(y_{jk})}\thicksim \sum_{j\ge 1}\frac{(h_0/j)^2}{(h_0/j)^2+1}\thicksim h_0,\quad h_0\to\infty,
\]
we infer $\Sigma\gtrsim (\eps^2h_0+\eps^2)\Id\ge \eps h\Id$ in matrix order.
Combining this with the Hellinger bound \eqref{EqHellGaussGeneral} we arrive at the estimate
\begin{align*}
&\E[H^2({\cal L}((y_{0k})_k\,|\,(y_{jk})_{j\ge 1,k}),{\cal L}((\tilde y_{0k})_k\,|\,(y_{jk})_{j\ge 1,k}))]\\
&\lesssim \E\Big[\frac{\norm{\mu-\tilde\mu}^2}{\eps h}\Big]+\frac{\norm{\Sigma-\tilde\Sigma}_{HS}^2} {\eps^2h^2}\\
&\lesssim
\sum_{j\ge 1,k} \Big(\frac{\Var(\beta_{jk})}{\Var(y_{jk})}
-\frac{\Var_\eps(\beta_{jk})}{\Var_\eps(y_{jk})}\Big)^2\frac{\Var(y_{jk})}{\eps h}+
\sum_{j\ge 1,k} \Big(\frac{\eps^2\Var(\beta_{jk})}{\Var(y_{jk})}
-\frac{\eps^2\Var_\eps(\beta_{jk})}{\Var_\eps(y_{jk})}\Big)^2\eps^{-2}h^{-2}.
\end{align*}
The function $G(z):=\frac{\norm{\Phi_{jk}}^2z}{\norm{\Phi_{jk}}^2z+\eps^2}$ has derivative $G'(z)=\frac{\norm{\Phi_{jk}}^2\eps^2}{(\norm{\Phi_{jk}}^2z+\eps^2)^2}$ and thus satisfies uniformly over all $z$ bounded away from zero
$\abs{G(w)-G(z)}\lesssim \frac{\norm{\Phi_{jk}}^2\eps^2\abs{w-z}}{(\norm{\Phi_{jk}}^2+\eps^2)^2}$. Inserting $\abs{\sigma^2-\sigma_0^2}\lesssim v_\eps$ and $\norm{\Phi_{jk}}\thicksim h/j$, we thus find the uniform bound on $\Omega_\eps$
\[ \Big(\frac{\Var(\beta_{jk})}{\Var(y_{jk})}
-\frac{\Var_\eps(\beta_{jk})}{\Var_\eps(y_{jk})}\Big)^2\lesssim \frac{v_\eps^2\eps^4h^4/j^4}{(\eps^2+h^2/j^2)^4}\thicksim v_\eps^2\min(h_0/j,j/h_0)^4.
\]
Putting the estimates together, we arrive at
\begin{align*}
H^2({\cal L}({\cal Y}),{\cal L}(\tilde{\cal Y}))&\lesssim
v_\eps^2\sum_{j\ge 1,k} \min(h_0/j,j/h_0)^4\Big(\frac{1+h_0^2/j^2}{h_0}+\frac{1}
{h_0^2}\Big)+\PP(\Omega_\eps^\complement)\\
&\le 2v_\eps^2h^{-1}\sum_{j\ge 1}\min(h_0/j,j/h_0)^2h_0^{-1}+\PP(\Omega_\eps^\complement)\\
&\thicksim  v_\eps^2h_0^{-1}\eps^{-1}+\PP(\Omega_\eps^\complement)
\end{align*}
such that the Hellinger distance tends to zero uniformly if $h_0^{-1}v_\eps^2=o(\eps)$, which is ensured by our choice of $h_0$. This implies asymptotic equivalence of observing $\cal Y$ and $\tilde{\cal Y}$ and thus of experiment ${\cal E}_2$ and of just observing $(y_{jk})_{j\ge 1,k}$ in ${\cal E}_2$. By independence the latter is equivalent to ${\cal E}_{2,odd}\otimes{\cal E}_{2,even}$.

\subsection{An explicit series representation}\label{SecSeries}

We aim at deriving the formula
\begin{equation}\label{Eqseries} \sum_{j\ge 1}\frac{\lambda^3}{(\lambda^2+\pi^2j^2)^2}=\frac{1+4\lambda e^{-2\lambda}-e^{-4\lambda}}{4(1-e^{-2\lambda})^2}
-\frac{1}{2\lambda}
\end{equation}
for any $\lambda>0$. We employ Fourier techniques and consider the Fourier coefficients of $g(x)=e^{-\lambda x/\pi}$:
\[\hat{g}(j):=(2\pi)^{-1/2}\int_0^{2\pi}g(x)e^{ijx}dx
=\frac{\pi(1-e^{-2\lambda})}{\sqrt{2\pi}(\lambda-i\pi j)},\quad j\in\Z.
\]
For the $2\pi$-periodic convolution $g\ast g(x)=xe^{-\lambda x/\pi}+(2\pi-x)e^{-\lambda(2+x/\pi)}$
we obtain the Fourier coefficient as a product:
\[ \widehat{g\ast g}(j)=\frac{\pi^2(1-e^{-2\lambda})^2}{\sqrt{2\pi}(\lambda-i\pi j)^2}.
\]
The Parseval formula therefore yields
\[ \sum_{j\in\Z}\frac{1}{(\lambda^2+\pi^2j^2)^2}=\frac{2}{\pi^3(1-e^{-2\lambda})^4}
\sum_{j\in\Z}\abs{\widehat{g\ast g}(j)}^2=\frac{2}{\pi^3(1-e^{-2\lambda})^4} \norm{g\ast g}_{L^2}^2.
\]
We infer that $(\frac{\lambda}{\pi})^3\norm{g\ast g}_{L^2}^2$ equals
\begin{align*}
&\Big(\frac{\lambda}{\pi}\Big)^3 \int_0^{2\pi}\Big((1-e^{-2\lambda})^2x^2+4\pi^2e^{-4\lambda}
+4\pi(e^{-2\lambda}-e^{-4\lambda})x\Big)e^{-2\lambda x/\pi}dx\\
&=\Big(\lambda e^{-2\lambda}+(1-e^{-4\lambda})/4\Big)(1-e^{-2\lambda})^2
\end{align*}
and thus obtain
\[ \sum_{j\in\Z}\frac{\lambda^3}{(\lambda^2+\pi^2j^2)^2}=\frac{2}{(1-e^{-2\lambda})^2}
\Big(\lambda e^{-2\lambda}+(1-e^{-4\lambda})/4\Big).
\]
Using the symmetry in $j$, we establish \eqref{Eqseries}.

\subsection*{Acknowledgement}
I am grateful to Marc Hoffmann, Mark Podolskij and Johannes Schmidt-Hieber for very
useful discussions.

\bibliographystyle{economet}

\bibliography{mybib2}

\end{document}